\documentclass{amsart}

\usepackage{amsmath, amsthm, amssymb, tikz, hyperref}
\usepackage{graphicx}
\newtheorem{thm}{Theorem}[section]

\newtheorem{lem}[thm]{Lemma}
\newtheorem{cor}[thm]{Corollary}
\newtheorem{prop}[thm]{Proposition}

\theoremstyle{definition}
\newtheorem{dfn}[thm]{Definition}

\newtheorem{qst}[thm]{Question}
\theoremstyle{remark}

\newcommand{\im}{\operatorname{im}}

\newcommand{\R}{\mathbb{R}}
\newcommand{\N}{\mathbb{N}}
\newcommand{\Z}{\mathbb{Z}}
\newcommand{\Q}{\mathbb{Q}}

\newcommand{\bb}[1]{\mathbb{{#1}}}
\newcommand{\inv}{^{-1}}

\newcommand{\dom}{\operatorname{dom}}

\newcommand{\aut}{\operatorname{Aut}}

\newcommand{\res}{\upharpoonright}

\newcommand{\stab}{\operatorname{stab}}

\newcommand{\lra}{\Leftrightarrow}

\newcommand{\s}[1]{\mathcal{{#1}}} 

\title{Fiid coloring random maps}
\author[J Hsu]{Justin Hsu}
\address{Carnegie Mellon University, Dept. of Mathematics, 5000 Forbes Ave, Pittsburgh, PA 15213}
\email{jhsu3@andrew.cmu.edu}

\author[D Sium]{Daniel Sium}
\address{Carnegie Mellon University, Dept. of Mathematics, 5000 Forbes Ave, Pittsburgh, PA 15213}
\email{dsium@andrew.cmu.edu}

\author[R Thornton]{Riley Thornton}
\address{University of Michigan, Department of Mathematics, 530 Church St, Ann Arbor, MI 48109}
\email{rileyjt@umich.edu}

\begin{document}

\begin{abstract}
    We show that the percolation components on a square grid can be $5$-colored as a factor of iid so that neighboring components get different colors. Above the critical probability, we observe that these regions can be 4-colored. Along the way, we extend the classical correspondence between fiid process and measurable labellings of the Bernoulli shift to this quotient setting, prove a general coloring result about hyperfinite pmp planar graphs, and clarify some foundational issues about how to define planarity for Borel graphs.
\end{abstract}

\maketitle

\section{Introduction}

The infamous four color theorem states that any planar map (that is, any partition of the plane into connected regions) can be colored so that neighboring regions have different colors \cite{AppelHaken1977, AppelHakenKoch1977}. In this paper, we explore measurable and local variations of this theorem. What if each region needs to decide its color independently with limited communication? What if the map is generated randomly and the coloring needs to be computed as a factor of the map? What if we want to color infinite maps definably, without the axiom of choice? 

These questions are intimately linked. We can use factor of iid processes as a simple model for local computation. Factor of iid processes for a countable transitive graph correspond to labellings of an associated Bernoulli graphing, and measurability is closely linked to definability. See the survey of Grebik and Rozhon for details in the case of coloring problems on grids \cite{gridssurvey}.

 In particular, we will try to color planar maps arising from percolation. If we start with an infinite square grid and delete each edge with probability $(1-p)$ (that is, if we consider Bernoulli bond $p$-percolation), we break the grid into connected regions. These are the regions of the map we will try to color. 

Write $\chi_{fiid}((\widetilde{\Z^2})_p)$ for the number of colors need to color our percolation maps with a factor of iid process (a precise definition is given in Section \ref{sec: perc quotient}.) Our main theorem is this:

\begin{thm}
    For $p>\frac{1}{2}$, $\chi_{fiid}((\widetilde{\Z^2})_p)\leq 4$. For $p\leq \frac12$, $\chi_{fiid}((\widetilde{\Z^2})_p)\leq 5$.
\end{thm}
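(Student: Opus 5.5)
Our approach is to pass to the quotient graphing and invoke a measurable coloring result there. Let $\s{G}$ be the Bernoulli graphing of $\Z^2$ with edge labels in $\{0,1\}$ (open with probability $p$) together with auxiliary iid labels. Let $\s{H}$ be the quotient graph whose vertices are open clusters and whose edges join clusters containing adjacent grid vertices. First we check that an fiid coloring of $(\widetilde{\Z^2})_p$ is the same thing as a measurable proper coloring of the quotient graphing $\s{H}$, up to null sets. This is the quotient version of the correspondence between fiid processes and measurable labellings. The point needing care is that a cluster-invariant label is a factor only when it is computed from the configuration as seen from each vertex of the cluster; equivalence-invariance of Borel labellings gives exactly this. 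We then treat the two regimes separately.

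For $p\le \frac12$ (including the critical value), every cluster is almost surely finite. So $\s{H}$ is a locally finite Borel graph on the standard Borel space of finite clusters. It is planar, since contracting connected vertex sets of a planar graph preserves planarity. It is measure-class preserving with respect to the pushforward measure, and it is hyperfinite because it is a quotient of the hyperfinite $\Z^2$ graphing. We then apply the general result that hyperfinite pmp planar graphs admit measurable proper $5$-colorings. Our proof of that result would follow the Kempe-chain/degeneracy argument. Planar graphs are $5$-degenerate, and in fact contain vertices of degree $\le 5$. We exhaust $\s{H}$ by an increasing sequence of finite Borel subequivalence relations $E_n$, and fix colors in an ever-growing region, finitely many changes per vertex, so that the limit coloring exists almost everywhere. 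The local recoloring step is Heawood's five-color argument. The main obstacle is controlling how often a vertex is recolored: Kempe swaps can propagate far. To handle this we use a Borel toast-like sequence with sparse boundaries. Vertices near the boundaries of the $n$-th pieces have measure tending to zero summably. We recolor only inside pieces, keeping boundary colors fixed, using list-coloring with $5$ colors on planar graphs (Thomassen). Borel–Cantelli then gives a.e. stabilization.

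For $p>\frac12$ there is a unique infinite cluster, and all other clusters are finite. The infinite cluster can be chosen as a single vertex, by a measurable/factor choice. Give it color $4$. The finite clusters adjacent to it, together with the remaining finite clusters, form a planar graph. We want to $4$-color the rest with the constraint that neighbors of the infinite cluster avoid color $4$. The dual picture helps: for $p>\frac12$ the dual percolation is subcritical, so each finite cluster is enclosed in a finite dual circuit inside a "hole" of the infinite cluster. Each hole (a finite component of the complement of the infinite cluster) contains finitely many clusters. Adding the infinite cluster as one extra vertex adjacent to the boundary of the hole gives a finite planar map. The four color theorem colors it, and we permute colors so the infinite cluster gets color $4$. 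Doing this with a Borel choice per hole (e.g., the lexicographically least coloring under an fiid-induced ordering) is measurable. Distinct holes do not interact except through the infinite cluster, which always has color $4$. This gives $\chi_{fiid}\le 4$.
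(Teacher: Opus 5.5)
Your supercritical half is correct and is essentially the paper's argument. The paper gets finiteness of the components of $\widetilde{(\Z^2)}_p\setminus\{\mathcal C\}$ from RSW/FKG circuits in disjoint annuli, while you get it from subcriticality of the dual. Both then color $\mathcal C$ once and $4$-color each finite hole together with $\mathcal C$. The gap is in the subcritical half, at the step ``recolor only inside pieces, keeping boundary colors fixed, using list-coloring with $5$ colors (Thomassen).''

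\begin{itemize}
\item An arbitrary proper $5$-coloring frozen on the boundary of a piece need not extend inward. A single vertex of the piece adjacent to five frozen vertices of five different colors already blocks it, and nothing in your construction rules this out.
\item Thomassen's theorem only tolerates two adjacent precolored vertices on the outer face, and needs lists of size at least $3$ on the rest of that face. To use it, you must guarantee that each vertex next to the frozen set loses at most two colors and that these vertices lie on a face.
\item The region between a piece and the pieces nested in it has several holes, so Thomassen's one-face statement does not apply. With several faces carrying $3$-lists, you need those faces pairwise far apart (Postle--Thomas).
\end{itemize}

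The Kempe-chain/Borel--Cantelli layer does not repair this. Every fix near an old boundary is again an extension problem with precolored surroundings, which is exactly the step that fails.

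The missing idea is to control the shape and coloring of the toast boundaries so that nothing is ever recolored. The paper does this in steps:
\begin{itemize}
\item It extends $\mathcal G$ measurably to a locally finite triangulation, handling finite faces directly and infinite faces via a toast on the hyperfinite graph of vertex--face pairs. In a triangulation, $\partial_\tau A=\partial A$.
\item It takes a connected, well-separated toast (Bowen--Kun--Sabok, or Conley--Miller in the two-ended case). It then replaces each piece $A$ by $P(A)$, adding every vertex penned in by $A$, which makes each boundary a $2$-regular cycle.
\item Each such cycle is colored $0,1$ except for one special vertex colored $2$. Vertices adjacent to boundaries therefore keep the list $\{2,3,4\}$, so the list assignment on each region is restricted in the paper's sense.
\item With separation $2D+2$, the Postle--Thomas theorem colors each region independently, giving a one-shot coloring.
\end{itemize}

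Separately, the pushforward measure on clusters is only quasi-invariant. To invoke a pmp theorem you need the paper's reweighting by $|[x]|^{-1}$.
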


Our bound in the supercritical regime follows easily from deep results about percolation on the grid and raises intriguing questions for more general graphs. Our bound in the subcritical regime follows from a much more general theorem (which we expect is optimal).

\begin{thm}
    For any locally finite planar hyperfinite pmp graph $\s G$, $\chi_{\mu}(\s G)\leq 5.$
\end{thm}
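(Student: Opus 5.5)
We want a $\mu$-measurable proper $5$-coloring of a locally finite, planar, hyperfinite pmp graph $\s G$ on $(X,\mu)$. The plan is to combine three ingredients: hyperfiniteness, which writes $\s G$ (up to a null set) as an increasing union of finite-component Borel subgraphs; the $5$-degeneracy of finite planar graphs; and a measurable analogue of the classical greedy argument. Since the colors must be fixed once and for all, we cannot simply recolor each finite piece. Instead we will use a ``shrinking boundary'' version of the degenerate coloring, in the spirit of the standard proof that hyperfinite graphs of bounded degree admit measurable $(\Delta+1)$-colorings (Conley--Marks--Tucker-Drob style).

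\textbf{Step 1 (setup).} Fix a Borel hyperfinite exhaustion $E_0\subseteq E_1\subseteq\cdots$ of the connectedness relation of $\s G$ by finite Borel equivalence relations, defined off a null invariant set. Every induced subgraph of $\s G$ on a finite set is planar, so we have the following fact: every finite induced subgraph has a vertex of degree at most $5$ \emph{within that subgraph}. We apply this to each $E_n$-class $C$ and to sets of the form $C\cup(\text{already colored stuff})$.

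\textbf{Step 2 (ordering inside a class).} For a finite set $F$, we build a Borel ``degeneracy ordering'' $v_1,\dots,v_k$ by repeatedly removing a vertex of minimum degree in the remaining induced subgraph, breaking ties with a fixed Borel linear order on $X$. Each $v_i$ then has at most $5$ neighbors among $v_{i+1},\dots,v_k$. Coloring greedily in the reverse order uses $6$ colors. Getting down to $5$ requires the Kempe-chain step from Heawood's proof of the five color theorem: when a vertex of degree $5$ sees all $5$ colors, we swap colors along a two-colored Kempe chain. In a finite piece this is fine. The problem is that the chain may exit the current piece and run through infinitely many previously colored vertices, which would break measurability and stability.

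\textbf{Step 3 (handling the boundary, main obstacle).} The key difficulty is to ensure that the Kempe swaps and new colorings at stage $n$ only touch a region whose measure goes to $0$, so the colorings stabilize almost everywhere. I would use a choice of $E_n$ with small boundary, i.e. $\mu(\{x: x \text{ has a } \s G\text{-neighbor outside } [x]_{E_n}\})\to0$. Hyperfinite pmp graphs of finite expected degree admit such exhaustions; after restricting to a conull set we may assume bounded or integrable degree, and local finiteness plus a measure-exhaustion argument should suffice. At stage $n$, recolor each $E_n$-class $C$ from scratch with a $5$-coloring (five color theorem for the finite planar graph $\s G\res C$), but keep the stage-$(n-1)$ colors wherever possible. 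Concretely, we keep them on the interior vertices of $E_{n-1}$-classes far from the boundary, and only allow changes within a bounded-radius neighborhood $N_n$ of $\partial E_{n-1}$. To make the extension possible, we color $C$ by first contracting or fixing the precolored part and applying a precoloring-extension result for planar graphs. Thomassen-type list/precoloring theorems let a $5$-coloring extend if the precolored vertices lie on one face and induce a path. Because this may not hold, the fallback is to choose the $E_n$ so that boundaries are far apart and to discard (re-color freely) a neighborhood whose measure is summable. By Borel--Cantelli, almost every point changes color only finitely often, and the limit coloring is a measurable proper $5$-coloring.

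Making this precoloring-extension step rigorous, with controlled change regions and measure estimates, is where I expect the real work, and possibly the paper's planarity foundations, to be needed.
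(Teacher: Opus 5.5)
Your proposal has a genuine gap in Step 3, and the extension result you lean on does not say what you need. Thomassen's theorem allows only two adjacent precolored vertices on the outer face, with the other outer vertices having lists of size $3$. A longer precolored path on a face can block extension outright: an inner vertex adjacent to five path vertices that carry all five colors cannot be colored.

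More fundamentally, your scheme needs a gluing lemma. Given proper $5$-colorings of neighboring $E_{n-1}$-classes, you must recolor only a bounded-width (or summable-measure) strip around the interface so that the union becomes properly colored. Both sides of that strip carry unrelated, arbitrary $5$-colorings inherited from earlier stages. Nothing in the proposal proves such a lemma. Your fallback of freeing a neighborhood does not address it either, because the new boundary of the kept region again carries an arbitrary $5$-coloring that you do not control. In addition, boundaries of $E_n$-classes need not be cycles or faces, so face-based extension theorems do not even apply to them. Borel--Cantelli is only as strong as the extension lemma feeding it, and your Step 2 (degeneracy and Kempe chains) is never used.

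The missing idea is to control the palette on the boundaries in advance, so that no repair is ever needed. The paper proceeds as follows:
\begin{enumerate}
\item It extends $\s G$ to a hyperfinite pmp triangulation.
\item It builds a topological toast whose boundaries $\partial_\tau A$ are $2$-regular, i.e.\ cycles. It starts from a toast with connected pieces (Bowen--Kun--Sabok, or Conley--Miller in the two-ended case) and then fills in the penned-in vertices of each piece.
\item It colors all boundary cycles at once, using colors $0,1$ except for one chosen vertex per cycle, which gets color $2$.
\end{enumerate}
After this, every vertex adjacent to a boundary still has the list $\{2,3,4\}$. Each region between boundaries therefore has far-apart faces carrying restricted lists in the sense of the Postle--Thomas theorem. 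That theorem colors each region independently, so nothing is ever recolored and no stabilization argument is needed.

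A minor point: restricting to a conull set cannot make degrees bounded or integrable. The small-boundary property you want still holds, since $\mu(\{x: N(x)\not\subseteq [x]_{E_n}\})\to 0$ follows from local finiteness and continuity of measure alone.
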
 

In section \ref{sec: prelim}, we lay out the necessary background on graphs and percolation. In section \ref{sec: perc quotient}, we state our problem precisely. In section \ref{sec: supercritical}, we explain the bound in the supercritical regime and suggest some related open problems. In section \ref{sec: amenable}, we show how to use a specialized toast structure to color planar graphs. And, lastly in section \ref{sec: toast}, we explain how to construct the toast we need to get our main results.

\subsection{Acknowledgments} Thank you to Andrew Marks and Katalin Berlow for helpful conversations about the content of this paper. Much of this research was carried out as part of Carnegie Mellon University's SEMS program; we also thank Irina Gheorghiciuc for organizing the program. The third author was supported by NSF MSPRF grant DMS-2202827.

\section{Preliminaries}\label{sec: prelim}

A graph is a structure $G=(V,E)$ where $V$ is a set of vertices and $E\subseteq V^2$ is an irreflexive symmetric relation (so, we consider only simple graphs). An edge is a pair $\{x,y\}$ so that $E(x,y)$. An ordered pair $(x,y)$ in $E$ is called a directed edge.

We abuse notation slightly and write $\Z^2$ for the square grid, i.e.~the Cayley graph of $\Z^2$ with respect to its usual generating set:
\[\Z^2=(\Z^2, \{(\vec x, \vec y) :\vec x-\vec y\in S\}\] where $S=\{(1,0),(-1,0),(0,1),(0,-1)\}.$

If $\approx$ is an equivalence relation on the vertex set $V$, we write $[v]_\approx$ for the equivalence class of $v$, and we suppress the subscript when $\approx$ is clear. The quotient graph by $\approx$ is defined by
\[(G/\approx):= \;(V/\approx, \{([x],[y]): E(x,y),  x\not\approx y\}).\] 

For $A\subseteq V$, we write $N(A)$ for the set of neighbors of elements of $A$: \[N(A)=\{v\in V: (\exists u\in A)\;E(u,v)\}.\] We write $N(v)=N(\{v\})$. The graph metric on $V$ is 
\[d(x,y)=\min\{n: y\in N^n(X)\}.\] This is an extended metric as $d(x,y)$ may be infinite if there is no path from $x$ to $y$. The connectedness relation is the equivalence relation $E_G(x,y):\lra d(x,y)<\infty$. For $A, B\subseteq V$, $d(A,B)=\min\{d(a,b): a\in A, b\in B\}$.

A ray in a graph is an injective function $r: \N\rightarrow V$ so that $E(r(n),r(n+1))$ for all $n$.

For $A\subseteq V$, $\partial A$ is the external vertex boundary of $A$. That is, $\partial A=N(A)\setminus A$. This is the smallest set of vertices needed to disconnected $A$ from the rest of the graph.

 A combinatorial embedding of a graph is an assignment for each vertex $v$ of a cyclic order to the edges incident to $v$ (elsewhere, these are called rotation systems). Given a drawing of a graph on an orientable surface, we can get a combinatorial embedding by taking the edges around a vertex in clockwise order. For connected locally finite graphs, all combinatorial embeddings arise this way. \cite[Chapter 3]{MoharThomassen}

 We equip $\Z^2$ it's usual combinatorial embedding:
 \[(x, (1,0)\cdot x), (x, (0,1)\cdot x), (x, (-1,0)\cdot x), (x, (0,-1)\cdot x)\] are in counterclockwise order. If $G$ has a combinatorial embedding and $E$ is an equivalence relation on $G$ with finite connected classes, then $G/E$ inherits a combinatorial embedding.
 
 A facial path in a combinatorial embedding is a sequence of edges and vertices \[... e_i,v_i, e_{i+1},v_{i+1}, e_{i+2}, ...\] so that, for all $i$, $v_i$ is incident to $e_i$ and $e_{i+1}$, and $e_{i+1}$ is the successor of $e_i$ in the cyclic order around $v_i$. An oriented face is a maximal facial path so that no edge appears twice unless its vertices appear in reverse order (this last case is to cover for bridges). Note that an oriented face may be infinite (and $\Z$-indexed). A face boundary is the set of vertices and edges in an oriented face. 

 For this paper, a face is an equivalence class of oriented faces. For oriented faces $x,y$, define 
\[x\equiv_F y\;:\lra \; (\exists n\in \Z) \;x(i)=y(i+n)\] where the indices may be taken modulo the length of $x$. Then a face is an $\equiv_F$-class. In most cases, a face can be identified with its face boundary. The one exception is if $G$ is a bi-infinite path or a cycle. Then, $G$ has two faces with the same boundary, and these are distinguished by their orientation. 
 
 If $G$ is finite and its combinatorial embedding comes from a drawing of $G$ on a surface, a face boundary is a minimal set needed to topologically disconnect a region that doesn't contain any vertices or edges. A triangulation is a graph equipped with a combinatorial embedding where every face boundary has 3 edges.
 
For a set $A$ of face boundaries in $G$, $\partial_F A=\bigcup A\cap V$ is the set of vertices in any of the faces in $A$. For a set of vertices $A\subseteq V$, the topological boundary of $A$, $\partial_\tau A$, is the set of all vertices not in $A$ that share a face with $A$. This is the smallest set of vertices needed to topologically disconnect $A$ from the rest of the graph.

 A graph is planar if it can be drawn in the plane with disjoint arcs representing the edges. For connected locally finite graphs, this is equivalent to having a planar embedding satisfying Euler's formula ($v-e+f=2$) on finite subgraphs. \cite{MoharThomassen} 
 
 We say that a graph is Borel if its vertex set is a standard Borel space and its edge set is Borel in the product Borel structure. We say that a graph is Borel planar if it is Borel and it has a Borel combinatorial embedding that satisfies Euler's formula. There are other notions of Borel planarity in the literature. Our is slightly stronger than some. A thorough discussion is given in an appendix.

A graph $\s G=(V,E)$ where $V$ is a standard Borel space is said to preserve a measure $\mu$ on $V$ if, for any partial Borel bijection $f\subseteq E$ (i.e.~any set of directed edges with no two incident to a common vertex), $\mu(\dom(f))=\mu(\im(f))$. For locally countable graphs, this is equivalent to saying that $\s G$ is the Schreier graph of a measure preserving group action, or that any (partial) Borel bijection $f: V\rightarrow V$ so that $f(v)$ is connected to $v$ for all $v$ is measure preserving. We call a graph $\s G$ equipped with a probability measure that it preserves a pmp (probability measure preserving) graph. When we speak of properties of a pmp graph, we mean properties that hold when we restrict to a co-null set. So, a pmp graph has a measurable $k$-coloring if there is a measurable labelling that almost surely gives each vertex a different label from its neighbors.

For a pmp graph $\s G$, we write $\chi_{\mu}(\s G)$ for the smallest number of colors in a measurable coloring of $\s G$.

For a graph $G$, $G_p$ is the random graph where each edge is deleted from $G$ independently with probability $(1-p).$ There are two threshold probabilities attached to $G_p$ when $G$ is connected, which we denote $p_c(G)$ and $p_u(G)$. For $p<p_c(G)$, $G_p$ almost surely has only finite components, and for $p>p_c(G)$, $G_p$ almost surely has at least one infinite component. For $p>p_u(G)$, $G_p$ almost surely has exactly one infinite component and for $p<p_u(G)$, $G_p$ has either infinitely many or no infinite components. For square grids, $p_c(\Z^2)=p_u(\Z^2)=1/2.$ \cite{HaggstromSurvey}

Let us emphasize that $G_p$ is a random object. When we say that $G_p$ has a property $\phi$, either $\phi$ is a property of probability distributions or we mean that $\phi$ holds almost surely.

We write $[0,1]^G$ for the space of vertex-and-edge labellings of $G$ and $[0,1]^V$ for the space of vertex labellings. For any set $A$, The group $\aut(G)$ acts on $A^G$ and $A^V$ by
\[\rho\cdot x=x\circ \rho\inv.\] We equip $[0,1]^G$ with the product measure $\lambda^G$, where $\lambda$ is Lebesgue measure.

We write $\aut(G)$ for the group of automorphisms of $G$. If $G$ is equipped with combinatorial embedding, we mean the group of orientation preserving automorphisms. So, if $e$ is clockwise of $f$ around $v$, $\rho(e)$ is clockwise of $\rho(f)$ around $\rho(v)$. For example, $\aut(\Z^2)$ is generated by translations and rotations, but does not contain reflections. And, we write $\stab(x)$ for the stabilizer of $x$ under $\aut(G)$:
\[\stab(x)=\{\rho\in \aut(G): \rho(x)=x\}.\]

A factor map is a measurable map from $F:A^G\rightarrow B^G$ so that, for any $\rho\in \aut(G)$, 
\[\rho\cdot F(x)=F(\rho\cdot x).\] A factor of iid process on $G$ is a random $A$-labelling of $G$ which is the pushforward of a factor map $F: [0,1]^G\rightarrow A^G$. (These are restricted definitions, but they suffice for our purposes). Note that $[0,1]^G$ and $[0,1]^V$ factor onto each other, so it doesn't matter which we choose as our source variables. 

A graph $G$ is transitive if $\aut(G)$ acts transitively on vertices. We say $G$ is unimodular if $|\stab(x)\cdot y|=|\stab(y)\cdot x|$ for all $x,y\in V$. And, for $G$ transitive, we say that $G$ is amenable if, for any $\epsilon>0$ there is some $A\subseteq V$ with $|\partial A|/|A|<\epsilon$

\section{Percolation quotients} \label{sec: perc quotient}

This paper mostly concerns amenable planar graphs, and grids in particular. But, we will briefly record a few generalities about percolation on unimodular transitive graphs. Most of what we say here will generalize to unimodular random graphs, but we'll stick to the transitive case for the sake of exposition.

\begin{dfn}
    For a connected countable unimodular graph $\s G=(V,E)$, ${G}_p$ is the random subgraph of $G$ given by Bernoulli bond percolation, so each edge is included in $G_p$ independently with probability $p$. We call $G_p$ the \textbf{percolation subgraph} of $G$. Given $v\in V$, write $[v]_p$ for the (random) $G_p$-component of $v$. We write $[v]$ when $p$ is clear from context.

    The \textbf{percolation quotient} of $G$ is the graph $\widetilde G_p$ obtained by taking the quotient of $G$ by the $G_p$-connectedness relation. So, 
    \[\widetilde G_p=\bigl(\{[v]_p: v\in V\}, \{([v]_p,[u]_p): (v,u)\in E, [v]_p\not=[u]_p\}\bigr)\]
\end{dfn}

When $p<p_c(G)$, the Aizenmann--Barsky theorem says that $\bb P(\bigl|[v]\bigr|\geq n)$ decays exponentially \cite{aizenmannbarsky}. So, $\widetilde G_p$ is nearly quasi-isometric to $G$: there is logarithmic distortion between $G$ and $\widetilde G_p$ rather than constant.

In the critical and supercritical phases, the geometry of $\widetilde G_p$ is more mysterious. For instance, it is unclear whether or not $(\Z^3)_p$ has any (injective) rays when $p>p_c$.


We are interested in factor of iid labellings of $\widetilde G_p$. There are a few ways we might make this precise which turn out to be equivalent. We will use the following definition. 

\begin{dfn}
    An \textbf{fiid labelling of} $\widetilde G_p$ is a factor of iid labelling of $G$ which is almost surely constant on $G_p$-classes. 
\end{dfn}

You might recall the correspondence between $\aut(G)$-fiid labellings of $G$ and measurable labellings of the Bernoulli graphing over $G$, $\s B(G)$. A similar correspondence is available in our setting when $G_p$ has at most one infinite component. 

\begin{dfn}
Let $G=(V,E)$ be a transitive unimodular graph with a fixed root $v$. The \textbf{Bernoulli graphing} over $G$ is the pmp graph $\s B(G)=([0,1]^G/R, F)$ where $R$ is the stabilizer of the root in $\aut(G)$
\[R=\{\rho\in \aut(G): \rho(v)=v\},\] and 
\[F(R\cdot x, R\cdot y)\;:\lra\;(\exists \gamma\in \aut(G))\;\gamma\cdot x=y, \mbox{ and }E(\gamma(v), v). \]

We equip $\s B(G)$ with the measure
\[\mu(A)=\lambda^G(\{x: R\cdot x\in A\}),\] where $\lambda$ is Lebesgue measure.
\end{dfn}

In other words, $\s B(G)$ is the space of rooted isomorphism classes of labellings of $G$, and two labellings are adjacent if you can get one from the other by moving the root to a neighbor. At first glance, $\s B(G)$ may appear quite wild, but with probability $1$, a labelling $x\in [0,1]^G$ assigns unique values to each vertex. It follows that every component of $\s B(G)$ is isomorphic to $G$. Further $R$ is compact, so $[0,1]^G/R$ is a standard Borel space. By our conventions for $\aut(G)$, if $G$ has a combinatorial embedding, then $\s B(G)$ has a Borel combinatorial embedding.

Our main interest in $\s B(G)$ is that $\aut(G)$-equivariant factors of iid variables on $G$ correspond to measurable labellings of $\s B(G)$. If $f:[0,1]^G\rightarrow k^G$ is an fiid labelling of $G$, then $F([x])=f(x)(v)$ defines a measurable labelling of $\s B(G)$ and every labelling of $\s B(G)$ arises in this manner. In particular, $G_p$ can be viewed as a measurable subgraph of $\s B(G)$. 

\begin{dfn}
For $p\in[0,1]$, we write $\omega_p$ for the subgraph of $\s B(G_p)$ where $(R\cdot x,R\cdot y)\in \omega_p$ if and only if, whenever $\gamma\cdot x=y$, $x(v,\gamma(v))<p$. (Note: this doesn't depend on the choice $\gamma,x,$ or $y$)

If $p$ is so that $G_p$ almost surely has finite components, write $E_p$ for the connectedness relation in $\omega_p$. We define
\[\s B(\widetilde G_p)=\s B(G)/E_p\] Note that, since $E_p$ is smooth on a co-null set, $\s B(\widetilde G_p)$ carries a standard Borel structure. We define a measure $\widetilde\mu$ on $\s B(\widetilde G_p)$ by
\[\widetilde \mu(A)=\frac{1}{\bb E(\bigl|[x]\bigr|\inv)} \int_{x\in A} \bigl|[x]\bigr|\inv \;d\mu.\]

If $p$ is such that there is almost surely a unique connected component, $\s B(\widetilde G_p)$ is defined similarly, but first we restrict to the induced subgraph on vertices with finite $\omega_p$-component.
\end{dfn}

If you prefer the language of unimodular random rooted graphs, we could work with the random rooted graph $(\widetilde G_p, [v])$, but we need to re-weight the distribution to make it unimodular. Again, observe that if $G$ is planar, then $\s B(\widetilde G_p)$ is Borel planar for all $p$.

The next proposition gives an analogous correspondence between labelling $\widetilde G_p$ and labelling $\s B(\widetilde G_p)$, and it confirms that $\widetilde \mu$ is preserved by $\s B(\widetilde G_p)$.

\begin{prop}

Fix a countable transitive group $G$. 
\begin{enumerate}
    \item The graph $\s B(\widetilde G_p)$ preserves the measure $\widetilde \mu.$ 
    \item For $p<p_c$ $f: \s B(\widetilde G_p)\rightarrow k$ measurable, 
\[F: [0,1]^G\rightarrow k^G\]
  \[F(x)(u)=f([R\cdot \gamma\cdot x])\] where $\gamma(u)=v$ defines an $\aut(G)$-factor map which is $E_{G_p}$-invariant. Further, any $E_{G_p}$-invariant factor map arises in this way.

    \item For $p>p_u$, the same formula defines a factor map into the space of partial $k$-labellings of $G$ whose domain includes all finite components of $G_p$.
\end{enumerate}

\end{prop}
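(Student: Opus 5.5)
For part (1), I would use the mass transport principle on $G$, which is available because $G$ is unimodular. Measure preservation of $\s B(\widetilde G_p)$ reduces to checking that, for every Borel partial bijection $\phi\subseteq F$, $\widetilde\mu(\dom\phi)=\widetilde\mu(\im\phi)$. I would lift $\phi$ to $\s B(G)$ through the quotient by $E_p$. Each class $[x]$ of the quotient is a finite set of points of $\s B(G)$. So $\widetilde\mu(A)$ is, up to normalization, $\int_{\pi^{-1}(A)} |[x]|^{-1}\,d\mu$: each class contributes total mass $1$, spread uniformly over its vertices.

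Given $\phi$, I define a mass transport on $G$ (equivalently, on $\s B(G)$). For a labelling $x$ and vertices $a,b$, send mass $|[a]|^{-1}|[b]|^{-1}$ from $a$ to $b$ whenever $\phi([a])=[b]$. This is diagonally invariant because it is defined from the labelling only. The mass sent out of $v$ has expectation proportional to $\widetilde\mu(\dom\phi)$, since each vertex of a class in the domain sends mass $|[a]|^{-1}$ in total. Similarly, the mass received at $v$ has expectation proportional to $\widetilde\mu(\im\phi)$. The mass transport principle then gives equality.

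The step needing care is checking that $\widetilde\mu$ is a probability measure and that the quotient is standard Borel. The latter is stated already: $E_p$ is smooth, being finite. For $p>p_u$, I would restrict to the finite classes and note that the restricted graph is still invariant, so the same transport argument applies. This passage through the mass transport principle is the main obstacle: one has to make sure it is applied on $G$ with the weights $|[\cdot]|^{-1}$ and is legitimate for the partial bijection on classes.

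For part (2), I would first check that $F$ is well defined. If $\gamma,\gamma'$ both send $u$ to $v$, then $\gamma'\gamma^{-1}\in R$, so $R\cdot\gamma\cdot x=R\cdot\gamma'\cdot x$. Measurability follows since $f$ and the quotient map are measurable. For equivariance, take $\rho\in\aut(G)$. Then $F(\rho\cdot x)(\rho(u))=f([R\gamma'\rho x])$ with $\gamma'\rho(u)=v$, and $\gamma=\gamma'\rho$ is admissible for $u$. This gives $F(\rho\cdot x)=\rho\cdot F(x)$ under the convention $\rho\cdot y=y\circ\rho^{-1}$.

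For $E_{G_p}$-invariance, suppose $u,w$ lie in the same $G_p$-component of $x$. Then the rooted labellings $R\gamma_u x$ and $R\gamma_w x$ are joined by an $\omega_p$-path: move the root along the open path from $u$ to $w$, checking the edge-label condition at each step. Hence they are $E_p$-equivalent, and $f$ assigns them the same value.

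For the converse, let $F$ be an invariant factor map and set $f([R\cdot x])=F(x)(v)$. This is well defined because $F$ is equivariant and $R$ fixes $v$. It descends through $E_p$ by $E_{G_p}$-invariance (almost surely), and recovers $F$ by equivariance. I would restrict to the conull set of labellings with distinct values, on which everything is well defined.

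For part (3), I would use the same formulas, with domain restricted to vertices whose $\omega_p$-class is finite. That set is an invariant Borel set, so equivariance and invariance go through unchanged.
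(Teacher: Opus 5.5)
Your proposal is correct. For parts (2) and (3) you follow the paper's route: reduce to the standard correspondence between labellings of $\s B(G)$ and factor maps on $G$. The only difference is that you verify well-definedness, equivariance, $E_{G_p}$-invariance and the converse by hand, whereas the paper simply cites that correspondence.

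For part (1) your route differs. The paper first reduces to the case where all classes in $\dom(f)$ have a fixed size $n$ and all classes in $\im(f)$ a fixed size $m$. It then picks a Borel selector $x_C\in C$ and uses only that $\s B(G)$ is measure preserving. This gives $\mu(A)=\mu(B)$ for the selected sets, and $\mu(A)=\frac1n\mu(\{x:[x]\in\dom f\})$ (similarly with $m$ for $B$).

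You instead run the mass transport principle on $G$ with weights $|[a]|^{-1}|[b]|^{-1}$. This handles all class sizes simultaneously, with no selector and no partition by size. It also shows directly why $|[x]|^{-1}$ is the right density: each vertex $a$ of a class in $\dom\phi$ emits total mass $|[a]|^{-1}$, and each vertex $b$ of a class in $\im\phi$ receives total mass $|[b]|^{-1}$, so every class sends and receives exactly one unit.

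The two arguments are the same fact in different clothing, since unimodularity of $G$ is exactly what makes $\s B(G)$ measure preserving. What each buys is different, though. The paper's version is a general statement about quotienting any pmp graph by a finite Borel subequivalence relation, and needs no transitive structure. Yours needs the mass transport principle, hence unimodularity, which is a standing assumption here. In exchange, yours carries over verbatim to unimodular random rooted graphs, the generalization the paper mentions.
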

\begin{proof}
First we check that $\s B(\widetilde G_p)$ prserves $\widetilde \mu$. Fix a partial bijection $f\subseteq \s B(\widetilde G_p)$. We want to show that $\widetilde \mu(\dom(f))=\widetilde \mu(\im(f)).$ Without loss of generality, we may assume that the components in $\dom(f)$ all have the same size $n$, and the components in $\im(f)$ all have the same size $m$ (recall that vertices in $\s B(\widetilde G_p)$ are $\omega_p$ components). For each $C\in \dom(f)\cup \im(f)$, we can choose some $x_C\in C$. If $f(C)=D$, then $x_C$ and $x_D$ are connected in $\s B(G)$, and $x_C\mapsto x_{f(C)}$ defines a bijection between $A=\{x_C: C\in\dom(f)\}$ and $B=\{x_C: C\in \im(f)\}$. Thus, \[\frac{1}{n}\mu(\{x: [x]\in \dom(f)\})=\mu(A)=\mu(B)=\frac{1}m\mu\{x: [x]\in \im(f)\}.\] So, by definition of $\widetilde \mu$, $\widetilde\mu(\dom(f))=\widetilde\mu(\im(f))$ as desired.

For items $(2)$ and $(3)$, labellings of $\s B(\widetilde G_p)$ correspond to $E_{p}$-invariant labellings of $\s B(G)$. By the labelling correspondence for $\s B(G)$, these correspond to $E_{G_p}$-invariant factor maps for $G$.
\end{proof}

\begin{cor}
    $\chi_{\mu}(\s B(\widetilde G_p))=\chi_{fiid}(\widetilde G_p)$.
\end{cor}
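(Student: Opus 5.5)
The plan is to prove the two inequalities $\chi_{\mu}(\s B(\widetilde G_p))\leq\chi_{fiid}(\widetilde G_p)$ and $\chi_{fiid}(\widetilde G_p)\leq\chi_{\mu}(\s B(\widetilde G_p))$ separately. In each direction we transfer a coloring through the correspondence in item (2) of the preceding proposition (or item (3) in the supercritical case), and then check that properness is preserved almost surely.

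\emph{From a measurable coloring to an fiid coloring.} Let $f:\s B(\widetilde G_p)\rightarrow k$ be a measurable proper coloring, proper off a $\widetilde\mu$-null set $N$. The proposition gives an $E_{G_p}$-invariant factor map $F$ with $F(x)(u)=f([R\cdot\gamma\cdot x])$. Take $u,w$ adjacent in $G$ and in different $G_p$-classes. Then $[R\cdot\gamma_u\cdot x]$ and $[R\cdot\gamma_w\cdot x]$ are adjacent vertices of $\s B(\widetilde G_p)$, since moving the root across an edge of $G$ is an edge of $\s B(G)$ and the quotient keeps it. So $F(x)(u)\neq F(x)(w)$ unless one of these classes lies in $N$. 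We must show this failure has $\lambda^G$-probability zero.

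\begin{itemize}
\item A $\widetilde\mu$-null set pulls back to a $\mu$-null set of $\s B(G)$, because the density $\bigl|[x]\bigr|\inv$ is positive wherever components are finite.
\item The saturation of a $\mu$-null set under the countable Borel equivalence relation of $\s B(G)$ is still null, since $\s B(G)$ is pmp and locally countable.
\item Hence for almost every $x$, no vertex of the labelled copy of $G$ lands in $N$, and $F(x)$ is a proper coloring of $\widetilde G_p$.
\end{itemize}

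\emph{From an fiid coloring to a measurable coloring.} Given an $E_{G_p}$-invariant factor map $F$ that is almost surely a proper coloring of $\widetilde G_p$, the converse part of the proposition yields $f$ on $\s B(\widetilde G_p)$. The set of $x$ where $F(x)$ is improper is $\aut(G)$-invariant and null. Its image in $\s B(G)$ is therefore $\mu$-null, and by absolute continuity its image in $\s B(\widetilde G_p)$ is $\widetilde\mu$-null. Off this set, two adjacent vertices of $\s B(\widetilde G_p)$ come from adjacent $G$-vertices in distinct $G_p$-classes of a single labelling, so they receive different colors.

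\emph{Main obstacle.} The delicate step is the case $p>p_u$, where $\s B(\widetilde G_p)$ is defined only on the finite components. There, item (3) of the proposition gives only a partial labelling, defined on the finite clusters. To fix this, I would define the color of the unique infinite cluster to be a new extra color if needed. Alternatively, I would interpret $\chi_{fiid}$ in this regime as coloring only the finite clusters, in which case the same argument goes through verbatim.

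The genuinely measure-theoretic points are that null sets are preserved under saturation and under passing between $\mu$ and $\widetilde\mu$. These rely on the pmp property of $\s B(G)$ and on part (1) of the proposition.
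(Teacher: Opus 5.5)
For the regime where all $G_p$-clusters are a.s. finite (item (2) of the proposition, which includes $p\le\frac12$ on $\Z^2$), your argument is correct and follows the paper. The paper states the corollary as immediate from the proposition's correspondence between measurable labellings of $\s B(\widetilde G_p)$ and $E_{G_p}$-invariant factor maps. What you add is the null-set bookkeeping it leaves implicit:
\begin{itemize}
\item a set in $\s B(\widetilde G_p)$ is $\widetilde\mu$-null iff its preimage in $\s B(G)$ is $\mu$-null;
\item saturations of null sets are null because $\s B(G)$ is pmp;
\item the set of improper configurations is $\aut(G)$-invariant, so its image in the quotient is null.
\end{itemize}

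The supercritical part, however, does not prove the statement. Giving the infinite cluster a new color yields only
\[\chi_\mu(\s B(\widetilde G_p))\le\chi_{fiid}(\widetilde G_p)\le\chi_\mu(\s B(\widetilde G_p))+1.\]
Reinterpreting $\chi_{fiid}$ so that it ignores the infinite cluster changes what is being proved. This cannot be patched cheaply. Because the infinite cluster is unique, its color under any fiid coloring is an $\aut(G)$-invariant function of the iid labels. By ergodicity of the Bernoulli shift, it is therefore a.s. a fixed color $c$. So an fiid $k$-coloring amounts to a measurable $k$-coloring of $\s B(\widetilde G_p)$ in which every cluster adjacent to the infinite one avoids $c$. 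Nothing in the proposition shows that this constraint costs no extra color.

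The paper does not address this either, since item (3) only produces partial labellings. So the equality is really justified, by you and by the paper, only in the all-finite regime. That is the case the paper actually needs for its $5$-color bound on $\Z^2$; the supercritical grid is handled separately by the Russo--Seymour--Welsh argument.
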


When $G$ is amenable, hyperfiniteness of $\s B(G)$ passes to $\s B(\widetilde G_p)$. This gives us some easy upper bounds on chromatic number.

\begin{lem} \label{lem: hyperfinite}
    Suppose $G$ is amenable, then $\s B(G)$ is hyperfinite.
\end{lem}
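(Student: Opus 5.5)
We will show that the connectedness relation of $\s B(G)$ is hyperfinite $\mu$-almost everywhere. The plan is to reduce this to a statement about the underlying orbit equivalence relation and then invoke an amenability theorem. Almost surely a labelling $x\in[0,1]^G$ is injective, so each $\s B(G)$-component is a copy of $G$. The connectedness relation $E_{\s B(G)}$ is then, on a co-null invariant set, the orbit equivalence relation of the action of $\aut(G)$ on $[0,1]^G/R$. Equivalently, it is the relation induced on the transversal $[0,1]^G/R$ by the action of the locally compact group $\aut(G)$ on $[0,1]^G$.

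\emph{Step 1: the structure of $\aut(G)$.} $G$ is transitive and locally finite, so $\aut(G)$ is a locally compact second countable group and the vertex stabilizer $R$ is compact open. For transitive graphs, amenability of $G$ in the F{\o}lner sense (small $|\partial A|/|A|$) is equivalent to $\aut(G)$ being amenable and unimodular; this is the Soardi--Woess / Salvatori theorem. We quote it, using that $G$ is unimodular in our setting.

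\emph{Step 2: hyperfiniteness.} By the Connes--Feldman--Weiss theorem, in its locally compact version, orbit equivalence relations of measure-preserving actions of amenable lcsc groups are hyperfinite a.e. Restricting to a lacunary cross section preserves hyperfiniteness. Here the quotient $[0,1]^G/R$ plays the role of that cross section, via the compact open subgroup $R$.

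If we want a more hands-on argument that avoids locally compact groups, we can build the witness directly from F{\o}lner sets in $G$. Fix finite sets $A_n\subseteq V$ with $|\partial A_n|/|A_n|<2^{-n}$. Using the iid labels, choose in a Borel, equivariant way a maximal family of disjoint translates of $A_n$ (images under automorphisms) that are pairwise far apart. Then use the Ornstein--Weiss quasi-tiling to cover all but an $\epsilon$-fraction of vertices. This produces finite Borel subequivalence relations $F_n$ with $\mu(\{x:[x]_{F_n}\ni\text{neighbors not all in }[x]_{F_n}\})\to 0$, making $\s B(G)$ $\mu$-hyperfinite. To make the $F_n$ increasing, we would combine them as in standard toast constructions. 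Alternatively, it suffices by Kechris--Miller to show that $\s B(G)$ is $\mu$-amenable: the measure-theoretic hyperfiniteness criterion only needs finite Borel pieces with small boundary measure.

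The main obstacle is Step 1, or its replacement by the direct tiling argument. The delicacy lies in passing from the F{\o}lner condition on the graph to a genuine amenability statement, and in doing the tiling equivariantly when $\aut(G)$ is not discrete. I would first try to cite the group-theoretic equivalence together with the Connes--Feldman--Weiss theorem. Only if that is unsatisfactory would I give the equivariant Ornstein--Weiss construction with iid labels breaking symmetry.
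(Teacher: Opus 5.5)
Your main chain (Soardi--Woess, then Connes--Feldman--Weiss for amenable lcsc groups on a cross section) is correct, but it is not what the paper's proof does; the two arguments are complementary. The paper never derives hyperfiniteness of $\s B(G)$ from amenability. It treats that as known: the preceding sentence speaks of it \emph{passing to} $\s B(\widetilde G_p)$. Its written proof is only the quotient step needed for the next corollary. If $E=\bigcup_n E_n$ with the $E_n$ increasing with finite classes, and $F\subseteq E$ has a Borel selector $s$, then $[x]E'_n[y]\lra s(x)E_ns(y)$ exhibits $E/F$ as hyperfinite. Taking $F=E_p$ gives hyperfiniteness of $\s B(\widetilde G_p)=\s B(G)/E_p$. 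You supply exactly the input the paper leaves implicit, at the price of two deep citations. The paper supplies the elementary passage to the percolation quotient, which is the graph Conley--Miller is actually applied to. So for your lemma to do its job in the paper, append that selector argument, restricting first to points with finite $\omega_p$-component when $p>p_u$. Two precision points in your version. First, $\aut(G)$ does not act on $[0,1]^G/R$, since $R$ is not normal. Phrase Step 2 via a Borel selector $S$ for the $R$-orbits, which exists because $R$ is compact. Then $S$ is a lacunary cross section with neighbourhood $R$, the induced relation on $S$ is $E_{\s B(G)}$, and the action is free on injective labellings. The measure class also matches $\mu$, because $\aut(G)$ is a countable union of cosets $\gamma_iR$ of the open subgroup $R$. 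Second, unimodularity is an output of Soardi--Woess, not an input you need to invoke. Your fallback Ornstein--Weiss sketch would not stand alone as written. A maximal family of far-apart translates of one F{\o}lner set covers only a small fraction of vertices. Getting the uncovered measure below $\epsilon$ needs a multi-scale quasi-tiling together with a mass-transport estimate using unimodularity. Since you offer it only as an alternative, this does not affect the main argument.
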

\begin{proof}
    In fact, the quotient of a hyperfinite equivalence relation by a smooth subequivalence relation is always hyperfinite: If $E=\bigcup E_n$ where each $E_n$ has finite classes and $F\subseteq E$ has a selector $s$, then $[x] E'_n [y]:\lra s(x) E_n s(y)$ has finite classes and $\bigcup E'_n = (E/F)$. 
\end{proof}
\begin{cor}
    If $G$ is amenable, $\chi_{fiid}(\widetilde G_p)\leq 7$.
\end{cor}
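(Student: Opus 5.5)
The corollary should follow by passing hyperfiniteness down to the percolation quotient and then applying a bounded-degree coloring bound for hyperfinite pmp graphs. By the earlier corollary, $\chi_{fiid}(\widetilde G_p)=\chi_\mu(\s B(\widetilde G_p))$, so I will bound $\chi_\mu(\s B(\widetilde G_p))$ by $7$.

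\textbf{Step 1: the quotient is hyperfinite.} By Lemma \ref{lem: hyperfinite}, $\s B(G)$ is hyperfinite. The relation $E_p$ has finite classes (after restricting to the finite $\omega_p$-components in the case $p>p_u$), so it is smooth on a co-null set and admits a Borel selector $s$. The argument in the proof of Lemma \ref{lem: hyperfinite} then shows that $E_{\s B(G)}/E_p$ is hyperfinite. Its connectedness relation is exactly that of $\s B(\widetilde G_p)$, since quotient edges come from $\s B(G)$-edges.

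In the supercritical case we also need the restriction to finite components to stay hyperfinite. It does, because the induced subrelation on a Borel set of a hyperfinite relation is hyperfinite. The relevant measure is $\widetilde\mu$, which is equivalent to the pushforward of $\mu$ restricted to that set, so null sets agree.

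\textbf{Step 2: the number 7.} This bound holds for all $p$ and does not use a degree bound on the quotient, since a percolation cluster can be arbitrarily large and hence have unbounded degree. Instead I expect $7$ to come from planarity. $\s B(\widetilde G_p)$ is Borel planar, as noted above, because $G$ is planar (the case of interest here is $\Z^2$, which is amenable). A planar graph has finite subgraphs with average degree below $6$, hence is $5$-degenerate.

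The coloring plan is the standard one. Fix a measurable finite subequivalence relation $F_n$ whose classes are increasing and exhaust each component. On each finite class, use degeneracy to obtain an orientation of bounded out-degree. Then run the hyperfinite greedy argument for graphs of bounded "degeneracy": one could take for granted the known result that a hyperfinite pmp graph whose edges can be measurably oriented with out-degree at most $d$ has $\chi_\mu\le 2d+1$. With $d=3$, which comes from planar arboricity $3$, this gives $\chi_\mu\le 7$.

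\textbf{Main obstacle.} The hardest step is producing a measurable orientation with out-degree at most $3$. I would first try to get it a.e. on the whole component, taking limits of finite orientations via a compactness/uniform argument. Alternatively, I would use the measurable decomposition of a hyperfinite planar graph into $3$ forests, obtained on a co-null set by exhausting with finite classes and invoking Nash-Williams on each class.

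Once the orientation is in hand, the coloring step is routine. Given a $\le 3$-out-degree orientation, define an auxiliary graph of bounded degree $\le 6$ in the relevant local sense. The known bound $\chi_\mu\le \Delta+1$ for hyperfinite pmp graphs, obtained by greedy coloring along the exhaustion while fixing boundary errors on sets of measure going to $0$, then yields $7$ colors.
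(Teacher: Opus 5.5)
Your Step 1 is correct and is exactly the hyperfiniteness input the paper uses. Step 2, however, has a genuine gap. Everything rests on a measurable orientation of $\s B(\widetilde G_p)$ with out-degree at most $3$, and neither of your two sketched constructions produces one.

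Applying Hakimi or Nash-Williams inside the classes of a finite subequivalence relation $F_n$ only orients edges internal to those classes. The choices made at stages $n$ and $n+1$ need not be compatible. The crossing edges must still be oriented without pushing any endpoint above out-degree $3$, which in general forces changes deep inside the classes. A compactness limit is a choice argument and yields no measurable object. Passing to $3$ forests does not help either: that decomposition has the same coherence problem, and orienting an infinite tree with out-degree $\le 1$ is the same problem again.

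This is not a technicality. Take the free pmp action of $\Z/2*\Z/2$ on the circle by $a(x)=-x$ and $b(x)=\alpha-x$ with $\alpha$ irrational. Its Schreier graph is a hyperfinite planar forest of bi-infinite lines (arboricity $1$), yet it has no measurable orientation of out-degree $\le 1$:
\begin{itemize}
\item such an orientation of a line has at most one sink, and a sink in each line would be a measurable transversal, which cannot exist;
\item otherwise every vertex has out-degree exactly $1$, and the set $S$ of points whose out-edge is the $a$-edge satisfies $aS=bS=S^c$, so $S$ has measure $1/2$ yet is invariant under the ergodic rotation $ba$.
\end{itemize}
So ``measurable orientation with out-degree equal to the arboricity'' fails in general, and your route would require proving a new theorem rather than running a routine limit. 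Separately, your closing appeal to $\chi_\mu\le\Delta+1$ does not apply, because in-degrees are unbounded. The Kechris--Solecki--Todorcevic bound of $2d+1$ for graphs generated by $d$ Borel functions is what would finish the argument once an orientation was in hand.

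The missing idea is that no orientation is needed. Conley and Miller proved that every locally finite Borel graph whose connectedness relation is $\mu$-hyperfinite satisfies $\chi_\mu\le 2\chi-1$. Each hypothesis holds here:
\begin{itemize}
\item $\s B(\widetilde G_p)$ is locally finite, since a finite cluster of a locally finite graph has finitely many neighbouring clusters. As you observed, the degree is unbounded, but that is harmless for this theorem.
\item It is hyperfinite by your Step 1.
\item Each component is a minor of the planar graph $G$, hence $4$-colorable by the four color theorem together with de Bruijn--Erd\H{o}s.
\end{itemize}
Therefore $\chi_{fiid}(\widetilde G_p)=\chi_\mu(\s B(\widetilde G_p))\le 2\cdot 4-1=7$. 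This is the paper's proof; the paper writes $2\chi$, but $2\chi-1$ is what gives $7$.
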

\begin{proof}
    By Conley--Miller \cite{ConleyMiller}, hyperfiniteness implies $\chi_{\mu}(\s B(\widetilde G_p))\leq 2\chi(\s B(\widetilde G_p)).$ By the four-color theorem, this is at most $7$.
\end{proof}

\section{Grids at and above criticality} \label{sec: supercritical}

 In the critical and supercritical regime, the Russo--Seymour--Welsh theorem gives us a great deal of structural information about percolation on $\Z^2$. By a rectangle in $\Z^2$ we mean a product of discrete intervals, $R=([a,b]\times [c,d])\cap \Z^2$. The aspect ratio of a rectangle is the ratio of its side lengths, $(c-d)/(a-b)$ in our example.

\begin{thm}[\cite{Russo}\cite{SeymourWelsh}]
    Suppose $p\geq 1/2$. For each $r\in \Q$, there is some probability $p_r>0$ so that, for any rectangle $R$ in $\Z^2$ with aspect ratio $r$, the probability there is a path from the left edge to the right edge of $R$ in $\Z^2_p$ is at least $p_r$.
\end{thm}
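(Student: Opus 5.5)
The plan is to reduce to the critical case $p=\frac12$ and then run the classical Russo--Seymour--Welsh argument: self-duality gives square crossings, a symmetry argument gives crossings of slightly longer rectangles, and the FKG (Harris) inequality glues these together.

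\emph{Reduction to $p=\tfrac12$.} The event ``there is a left--right crossing of $R$ in $\Z^2_p$'' is increasing. By the standard monotone coupling of $\Z^2_p$ and $\Z^2_{1/2}$ (include an edge iff its uniform label is $<p$), its probability is nondecreasing in $p$. So it suffices to produce $p_r>0$ at $p=\frac12$.

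\emph{Short rectangles.} If $r\le 1$, meaning the rectangle is at least as wide horizontally as it is tall, then the rectangle is contained in a taller one. The relevant comparison is this: any left--right crossing of an $n\times m$ rectangle with $m\ge n$ is also a crossing of the sub-rectangle of height $m$ and width $n$ only in the easy direction. More precisely, crossing the long way is the hard case, and crossing the short way is implied by crossing a square contained in it. So the real content is the hard direction, i.e.\ bounding the probability of crossing an $a\times b$ box the long way by a constant depending only on $a/b$.

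\emph{Squares.} At $p=\frac12$ the dual of bond percolation on $\Z^2$ is again bond percolation at $\frac12$ on the dual lattice. Take an $(n+1)\times n$ rectangle: exactly one of ``primal left--right crossing'' and ``dual top--bottom crossing of the rotated dual rectangle'' occurs. The two events have equal probability by rotation together with self-duality. Hence the crossing probability is exactly $\frac12$, and nearly square boxes are crossed with probability at least $\frac12$. Discretisation issues (sides of length $n$ versus $n+1$) are absorbed into constants.

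\emph{Extending aspect ratio (main obstacle).} The key RSW step is to pass from squares to $3n\times 2n$ (or $2n\times n$) rectangles. I would follow the standard argument in three steps.
\begin{enumerate}
\item In the left square of two overlapping squares, condition on the \emph{lowest} left--right crossing $\gamma$. This is measurable with respect to edges on and below $\gamma$.
\item Reflect $\gamma$ across the vertical midline of the combined region. By self-duality and symmetry, the region above $\gamma\cup\gamma'$ is crossed from $\gamma$ to the top with conditional probability at least a constant fraction of $\frac12$. This uses that the unexplored region above $\gamma$ is still independent Bernoulli.
\item Combine with a vertical crossing of the middle square and a crossing of the right square via FKG.
\end{enumerate}
This gives $\mathbb P(\text{cross } \tfrac32\text{-ratio box})\ge c>0$. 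This conditioning/reflection step is the delicate one: one must check that the lowest crossing is a stopping-set-type object, and handle the lattice reflection symmetry for $\Z^2$ edges.

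\emph{Arbitrary rational $r$.} A $kn\times n$ box can be crossed by chaining $k$ overlapping $\frac32$-ratio horizontal crossings with $k-1$ vertical square crossings in the overlaps. All of these are increasing events, so FKG gives probability at least $c^{2k}$. For rational $r$, choose $k\ge r$ and use monotonicity under inclusion of the shorter box. This yields $p_r>0$ uniform over all rectangles of aspect ratio $r$.
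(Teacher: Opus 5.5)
The paper does not prove this statement; it quotes it from Russo and Seymour--Welsh, so your outline has to stand on its own. The outer layers are fine. The monotone coupling reduces to $p=\tfrac12$, and self-duality gives exactly $\tfrac12$ for the $(n+1)\times n$ box. Chaining long crossings through vertical crossings of the overlap squares, with FKG, then handles every ratio. One correction: with the paper's convention that aspect ratio is height over width, the hard case is $r<1$, so you need $k\ge 1/r$, not $k\ge r$.

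\textbf{The gap.} It is in the step you call the main obstacle, and as you have set it up that step does not work.

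You take $\gamma$ to be the lowest left--right crossing of the whole left square $S_1$ and reflect it in the vertical midline $\ell$ of $R=S_1\cup S_2$. Because the squares overlap, $\ell$ lies strictly inside $S_1$. So the region of $R$ above $\gamma\cup\gamma'$ need not consist of unexplored edges. For example, suppose $\gamma$ runs along the bottom row to just left of $\ell$, climbs to just below the top, and continues to the right side of $S_1$. Then $\gamma'$ runs along the bottom row from the right side of $R$ to just right of $\ell$ before climbing. The edges of $S_1$ lying between that climb and the right side of $S_1$, below $\gamma$, are then joined to the top of $R$ through $R\setminus S_1$.

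Conditioning on $\{\gamma=g\}$ imposes on these edges the decreasing event that no open crossing passes below $g$. That biases them toward closed, which is the wrong direction for lower-bounding an increasing event. Discarding them instead destroys the reflection symmetry of the domain, which the self-duality comparison needs.

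Separately, ``crossed from $\gamma$ to the top'' is the wrong target. A path from $\gamma$ to an arbitrary point of the top does not separate the left and right sides of $S_2$. So nothing in your step 3 forces it to meet a crossing of $S_2$, and you do not say how a vertical crossing of a ``middle square'' would close this.

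\textbf{The standard repair: truncate at the axis.}
\begin{enumerate}
\item Let $\gamma$ be the lowest open path in the left half $L$ of $R$ from the left side of $R$ to $\ell$. It exists whenever $S_1$ is crossed. Conditioning on $\gamma=g$ reveals only edges of $L$ on or below $g$.
\item The domain $U$ above $g\cup\sigma(g)$ meets $L$ exactly in the region above $g$. So all its edges are fresh, and $U$ is $\sigma$-invariant.
\item Cut $\partial U$ into four arcs: $g$; $\sigma(g)$; $C$, the right side of $R$ above $\sigma(g)$ together with the right half of the top; and $\sigma(C)$. Exactly one of the following occurs: an open path in $U$ joins $g$ to $C$, or a closed dual path in $U$ joins $\sigma(g)$ to $\sigma(C)$.
\item Reflection plus self-duality bounds the conditional probability of the first alternative below by essentially $\tfrac12$. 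This is where the primal/dual boundary bookkeeping you mention actually has to be done.
\item Let $Z$ be the increasing event that an open path joins the left side of $R$ to the right side or to the right half of the top. By the above, $\mathbb P(Z)$ is bounded away from $0$.
\item On $Z\cap\sigma(Z)$ there is a left--right crossing of $R$, because the endpoints of the two paths interlace on $\partial R$. FKG then gives $\mathbb P(\text{crossing of } R)\ge \mathbb P(Z)^2$.
\end{enumerate}
Equivalently, you can reflect in the right side of $S_1$, using abutting rather than overlapping squares. Then the untruncated lowest crossing of $S_1$ works and you get ratio $2$ directly. No vertical crossing is needed in this step; vertical crossings enter only in your chaining.
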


So, when there is an infinite component in $(\Z^2)_p$, our coloring problem trivializes.

\begin{cor}
    Fix $p>1/2$, let $\s C$ be the infinite component of $(\bb Z^2)_p$. Then, $\widetilde {(\Z^2)}_p\setminus\{\s C\}$ has finite components. In particular, $\chi_{fiid}\bigl((\widetilde{\Z^2})_2\bigr)\leq 4.$
\end{cor}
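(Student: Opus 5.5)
The argument has two parts: first show that removing the infinite cluster leaves only finite components in the quotient, then color each finite piece locally and give $\s C$ its own color.

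\textbf{Finiteness of the components.} Fix $p>1/2$. By Harris--Kesten uniqueness ($p_u=p_c=1/2$), $(\Z^2)_p$ has almost surely exactly one infinite cluster $\s C$. Suppose, towards a contradiction, that $\widetilde{(\Z^2)}_p\setminus\{\s C\}$ had an infinite component $K$. Each vertex of $K$ is a finite cluster, and $\Z^2$ is locally finite, so the union $\bigcup K$ would be an infinite connected subset of $\Z^2$ disjoint from $\s C$. We rule this out with the Russo--Seymour--Welsh theorem above. Consider the annuli $A_k=[-3^{k+1},3^{k+1}]^2\setminus[-3^k,3^k]^2$. Each annulus is a union of four rectangles of aspect ratio $3$. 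Applying RSW (with the Harris--FKG inequality to combine the four crossings), each $A_k$ contains an open circuit surrounding the origin with probability at least some $c>0$ independent of $k$. These events for disjoint annuli are independent, so by Borel--Cantelli almost surely infinitely many $A_k$ contain open circuits. The same holds around every vertex, since there are countably many.

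Now let $\gamma_k$ be these nested open circuits. Any two of them are joined by an open path: by RSW again, infinitely many ``bridging'' crossings exist, or alternatively one can argue by uniqueness, since each circuit must meet the infinite cluster once it is large. Hence all but finitely many $\gamma_k$ lie in $\s C$. Take such a circuit $\gamma\subseteq\s C$ surrounding a vertex $u\in\bigcup K$. Any path in $\Z^2$ from $u$ to outside $\gamma$ must hit a vertex of $\gamma$, which is in $\s C$. So the connected set $\bigcup K$ lies inside $\gamma$, and is therefore finite, contradicting the assumption.

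\textbf{Coloring.} It remains to color measurably, equivalently as a factor of iid. The components of $\widetilde{(\Z^2)}_p\setminus\{\s C\}$ are finite planar graphs, so by the four color theorem each can be colored with $3$ colors plus one extra. That is not quite right, so instead we proceed as follows. Give $\s C$ color $4$. Each finite component $K$ together with $\s C$ is planar. Contract $\s C$ restricted to a neighborhood: concretely, the graph $K\cup\{\s C\}$ is a quotient of a finite piece of the grid, hence planar. By the four color theorem, $K\cup\{\s C\}$ has a $4$-coloring, and we permute colors so that $\s C$ gets color $4$. To make this a factor map, choose for each finite component the lexicographically least such coloring with respect to the iid labels, which is measurable and equivariant since the component is finite. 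The colorings on different $K$ never conflict, because distinct components of the complement of $\{\s C\}$ are nonadjacent.

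The main obstacle is the first part: showing that the open circuits eventually belong to the infinite cluster, and that RSW applies uniformly at $p>1/2$. Part (3) of the proposition then transfers the result to $\chi_{fiid}$.
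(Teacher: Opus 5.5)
Your argument is correct and is essentially the paper's: RSW and FKG on disjoint nested $3^k$-annuli give, almost surely, infinitely many open circuits around every vertex, the large ones lie in $\mathcal{C}$, so the components of $\widetilde{(\mathbb{Z}^2)}_p\setminus\{\mathcal{C}\}$ are finite, and then $\mathcal{C}$ gets one color and each finite piece is colored separately. Your choice to $4$-color $K\cup\{\mathcal{C}\}$ rather than $K$ alone is exactly what makes four colors suffice, a point the paper leaves implicit; for planarity, contract a finite \emph{connected} piece of $\mathcal{C}$ containing the neighbors of $\bigcup K$, so the result is a planar minor.

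Two things to fix. The claim that any two of the circuits $\gamma_k$ are joined by an open path is false as stated, and the closing appeal to part (3) of the proposition is unnecessary, since you build the factor directly. The justification you give second is the right one and matches the paper's ``one such ring must intersect $\mathcal{C}$'': once $\gamma_k$ encloses a vertex of $\mathcal{C}$, the infinite connected cluster must cross $\gamma_k$, so the open circuit $\gamma_k$ lies in $\mathcal{C}$.
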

\begin{proof}
    By considering nested $3^n\times 3^n$ squares, we can partition $\Z^2$ into rings built up of rectangles with aspect ratio $1/3$. Each of these rectangles is traversed by a path in $(\Z^2)_p$ with some positive probability $p_{1/3}$. So, by the FKG inequality, each ring contains a closed looped with probability at least $p_{1/3}^4$. Since the rings are disjoint, these probabilities are independent. Thus, each vertex in $\Z^2$ is, almost surely, surrounded by infinitely many rings. One such ring must intersect $\s C$. So, each vertex in $\Z^2$ is either in $\s C$, or its component in $\Z^2\setminus \s C$ is bounded in some ring and thus finite.
    
    Now, to color $(\widetilde{\Z^2})_p$, we can choose a color for $\s C$ then color each of the finite components of $(\widetilde{\Z^2})_p\setminus\{C\}$ separately.
\end{proof}

It is unclear how far this argument generalizes. 

\begin{dfn}
    For a countable graph $G$, define
    \begin{align*} p_t(G)&= \inf\{p>p_u: \widetilde G_p \mbox{ has an infinite ray}\} \\
    &=\inf\{p>p_u: G\setminus \s C_p \mbox{ has finite components}\}\end{align*} where $\s C_p$ is the infinite component in $G_p$.
\end{dfn}

\begin{qst} 
    Is $p_t(G)=p_u(G)$ for all $G$? For all planar amenable $G$? For $\Z^3$?
\end{qst}

It is instructive to consider the critical case. Here, each component is finite, so the Russo--Seymour--Welsh theorem says that the percolation quotient almost surely has a structure like a thickened one-ended tree. Let us make this precise.

\begin{dfn}
    For a graph $G=(V,E)$, say that a finite subset $A\subseteq V$ is \textbf{surrounded} by $B\subseteq V$ if every infinite ray from $A$ passes through $B$. Write $A\sqsubseteq B$ in this case.
\end{dfn}

The Russo--Seymour--Welsh theorem implies every percolation component is surrounded by another percolation component when $p\geq 1/2$. 

\begin{prop}
    The relation $\sqsubseteq$ is a partial order on $(\Z^2)_{1/2}$-components, and for any $v$, $C_v:=\{[u]: [v]\sqsubset [u]\}$ is a chain with a minimal element.
\end{prop}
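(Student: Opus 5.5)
We work almost surely throughout. At $p=1/2$ every component of $(\Z^2)_{1/2}$ is finite, and by the Russo--Seymour--Welsh argument from the preceding corollary, every vertex is surrounded by infinitely many disjoint closed circuits in $(\Z^2)_{1/2}$, one in each of infinitely many annuli $[-3^{n+1},3^{n+1}]^2\setminus[-3^n,3^n]^2$ around it.

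The plan has three steps.

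First, show that $\sqsubseteq$ is a partial order on components.
\begin{enumerate}
\item Reflexivity is trivial, since every ray from $A$ starts in $A$.
\item Transitivity is immediate: if every ray from $A$ meets $B$, then a ray from $A$ passes through some $b\in B$. Its tail from $b$ is a ray from $B$, so it meets $C$.
\item For antisymmetry, suppose $A\sqsubseteq B$ and $B\sqsubseteq A$ with $A\neq B$ distinct finite components. Let $U_A$ be the union of the finite components of $\Z^2\setminus A$, together with $A$; this is the region $A$ encloses. Then $A\sqsubseteq B$ says exactly that $B$ meets $U_A$, and hence $B\subseteq U_A\setminus A$, because $B$ is connected and disjoint from $A$. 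Consequently $U_B\subseteq U_A$. If also $A\subseteq U_B$, then $U_A\subseteq U_B$, so $U_A=U_B$ and $A\subseteq U_B\setminus B$. But $A\subseteq U_A$ was the outer region's boundary: the unbounded component of $\Z^2\setminus A$ touches $A$, while points of $U_B\setminus B$ have all rays passing through $B$. Since $B\subseteq U_A\setminus A$, any ray from $A$ going into the unbounded component of $\Z^2\setminus A$ avoids $B$, contradicting $A\sqsubseteq B$.
\end{enumerate}
The cleanest way to say all this: $A\sqsubseteq B$ with $A\ne B$ iff $A$ lies in a bounded component of $\Z^2\setminus B$. Then $U_A\subsetneq U_B$ gives a strict inclusion of finite sets, so antisymmetry and transitivity follow from set inclusion.

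Second, show that $C_v$ is a chain. Take $[u],[w]$ both strictly surrounding $[v]$. Then $[v]$ lies inside the finite sets $U_{[u]}$ and $U_{[w]}$, so these intersect. Using planarity, I claim the two regions are nested: $U_{[u]}$ and $U_{[w]}$ are "filled" sets with connected boundary components. If $[w]$ meets $U_{[u]}$, then, being connected and disjoint from $[u]$, it lies in a bounded complementary component of $[u]$, so $[w]\sqsubseteq[u]$. Otherwise $[w]$ lies in the unbounded component of $\Z^2\setminus[u]$. I expect the main obstacle to be this case. There I would argue with the Jordan-curve property of the outer boundary of $[w]$, a closed circuit in the planar dual sense. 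That circuit separates $[v]$ from infinity, and $[u]$ is connected and contains points reachable from $[v]$ without crossing $[w]$. From this I want to conclude $[u]\subseteq U_{[w]}$, i.e. $[u]\sqsubseteq[w]$. To make the planarity step rigorous I would use the fact that the faces of $\Z^2$ adjacent to $[w]$ on its outside form a simple cycle in the dual.

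Third, show that $C_v$ has a minimal element. Elements of $C_v$ correspond to strictly increasing finite sets $U_{[u]}$, each containing $[v]$, and $C_v$ is nonempty by the RSW annuli. Choose any $[u_0]\in C_v$. The elements of $C_v$ below $[u_0]$ have $U_{[u]}\subseteq U_{[u_0]}$, which is finite, so there are only finitely many of them. Hence a $\subseteq$-least $U_{[u]}$ exists, and since $C_v$ is a chain this gives the minimum.
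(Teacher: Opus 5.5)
Your outline (partial order, then chain, then finitely many elements below a fixed one) is the right one. The final step is fine once you have the characterization ``$A\sqsubset B$ iff $A$ lies in a finite component of $\Z^2\setminus B$'' (true, via K\"onig's lemma); the components strictly below $[u_0]$ are then disjoint subsets of the finite set $U_{[u_0]}$. But two steps are not actually established.

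In your antisymmetry item you run the relation backwards. For $A\ne B$, $A\sqsubseteq B$ means $B$ surrounds $A$, i.e.\ $A\subseteq U_B\setminus B$; it does not mean that $B$ meets $U_A$. In fact, when $A\sqsubset B$, $B$ never meets $U_A$. The reason is that $B$ is connected, misses $A$, and touches the infinite component of $\Z^2\setminus B$, which also misses $A$; so $B$ lies in the unbounded component of $\Z^2\setminus A$. Your closing contradiction uses only $A\sqsubseteq B$ and $B\subseteq U_A\setminus A$, so it would equally ``show'' that no component ever strictly surrounds another. Your ``cleanest way'' sentence corrects the direction but only asserts $U_A\subsetneq U_B$, which is the whole content. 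It needs the argument just sketched: an infinite component of $\Z^2\setminus B$ is connected and disjoint from $A$, so no finite component of $\Z^2\setminus A$ can meet it. The paper avoids regions entirely. A ray from $A$ must hit the finite set $B$, and its tail after the last visit to $B$ is a ray from $B$ that cannot meet $A$ (otherwise a further tail would be a ray from $A$ missing $B$), so $B\not\sqsubseteq A$.

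In the chain step, you leave the case $[w]\cap U_{[u]}=\emptyset$ as a plan via the Jordan curve theorem for a dual circuit. You also never justify that $[u]$ is reachable from $[v]$ without crossing $[w]$. No planarity is needed here. $[v]$ lies in a finite component $K$ of $\Z^2\setminus[u]$, so some path inside $K\cup[u]\subseteq U_{[u]}$ joins $[v]$ to $[u]$ and misses $[w]$. Since $[u]$ is connected and disjoint from $[w]$, all of $[u]$ then lies in the component of $\Z^2\setminus[w]$ containing $[v]$. That component is finite because $[v]\sqsubset[w]$, so $[u]\sqsubseteq[w]$. The paper's argument is similarly planarity-free: follow any ray from $v$. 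If it reaches $A$ before $B$, concatenate its initial segment, a path inside $A$, and a ray from $A$ avoiding $B$ (available if $A\not\sqsubseteq B$). This gives a walk, hence a ray, from $[v]$ avoiding $B$. With these two repairs your argument is complete.
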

\begin{proof}
    Reflexivity and transitivity are clear. Suppose $A\sqsubset B$, i.e.~$A\not= B$ and $A\sqsubseteq B$. There is an infinite ray from $A$ which must pass through $B$, and since $B$ is finite the ray must leave $B$ at some step. Consider the tail of the ray at this last step in $B$. This tail doesn't pass through $B$ again, so it must not pass through $A$. Thus $B\not\sqsubseteq A$, which gives anti-symmetry. 

    Russo--Seymour--Welsh (and the finiteness of components) implies $C_v\not=\emptyset$. Suppose towards contradiction that $C_v$ contains incomparable elements $A, B$. Then, since $A, B$ are disjoint, any ray from $v$ must meet one of $A$ or $B$ first. Without loss of generality, say $A$. Taking an initial segment, we get a path $r$ from $[v]$ to $A$ that avoids $B$. Since $A\not\sqsubseteq B$, we can find a ray $s$ from $A$ that does not pass through $B$. Since $A$ is connected, we can join up $r$ and $s$ to get a ray showing that $[v]\not\sqsubseteq B$, a contradiction. Finally, since each element of $C$ is finite, it can only surround finitely many components. Thus, $C$ has a smallest element.
\end{proof}

From here, one can use techniques for $3$-coloring one-ended trees to $5$-color $(\Z^2)_{1/2}$. This bound is subsumed by later results, so we won't pursue that path here.

\section{Amenable graphs} \label{sec: amenable}

We know that if $G$ is amenable, $\chi_{fiid}(\widetilde G_p)\leq 7$. Our goal is to shave off a few colors. In fact, our proof will work for all amenable pmp graphs.

Our strategy for coloring $\widetilde {(\Z^2)}_p$ in the subcritical case, and for coloring amenable planar graphs more generally, is familiar. We borrow deep results from the theory of finite surface graph coloring and combine them with a specialized toast construction. We will first explain the finitary theorem we require and how to use it to get Borel colorings from an appropriate toast. Then, we will explain how to build the required toast in the next section. 

\begin{dfn} \label{dfn: restricted}
    For a planar graph $G=(V, E)$ and list of face boundaries $C_1,..., C_n$ and vertices $v_i\in C_i$, a list assignment $L: V\rightarrow \s P(A)$ is \textbf{restricted on $(\vec C, \vec v)$} if
    \begin{enumerate}
        \item For all $i$, $L(v_i)\not=\emptyset$
        \item $|L(u)|\geq 3$ for $u\not\in \{v_1,..., v_n\}$
        \item $|L(v)|\geq 5$ for $u\not\in \bigcup_i C$.
    \end{enumerate}

\end{dfn}

Though not in this language, restricted assignments were introduced by Thomassen as part of an ingenious inductive hypothesis in a proof that planar graphs are list 5-colorable. Postle and Thomas generalized Thomassens's theorem.

\begin{thm}[\cite{PostleThomas}] \label{thm: postlethomas}
    There is a distance $D\in \N$ so that, for any planar graph $G$ with face boundaries $C_1,..., C_n$ and vertices $v_i\in C_i$, if the $C_i$ are pairwise at distance at least $D$ and $L$ is a list assignment which is restricted on each $C_i$, then $G$ admits an $L$-coloring. 
\end{thm}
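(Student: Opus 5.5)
This is the Postle--Thomas theorem on list-colouring planar graphs with several distant precoloured-ish faces, and the paper simply imports it. If I had to prove it, I would follow their strategy: it generalizes Thomassen's proof of 5-choosability, which handles the case $n=1$.

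\textbf{Base case $n=1$ (Thomassen).} Here $G$ is a near-triangulation with outer face $C_1$ and one vertex $v_1$ whose list has size at least $1$. In Thomassen's form one also allows a second outer-face vertex adjacent to $v_1$ with a list of size $2$, which includes our situation. I would induct on $|V|$.
\begin{enumerate}
\item If $C_1$ has a chord, split $G$ along the chord. Colour the side containing $v_1$ first. Then treat the chord's endpoints as the new precoloured pair on the other side and apply induction there.
\item If there is no chord, take the outer-face neighbour $u$ of $v_1$, going away from the size-$2$ vertex. Reserve two colours of $L(u)$ that differ from $v_1$'s colour. Delete them from the lists of the interior neighbours of $u$; those lists have size at least $5$, so they keep at least $3$. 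Apply induction to $G-u$.
\item The surviving colour choice for $u$ then always extends.
\end{enumerate}
Extending the lists $L(v_i)$ arbitrarily to size $1$ reduces our hypothesis to this form.

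\textbf{General case $n\ge 2$: minimal counterexample.} I would use induction on $n$ together with a minimal counterexample argument. Take $G$ vertex-minimal among counterexamples. Standard reductions give the following structure.
\begin{enumerate}
\item $G$ is $2$-connected near each $C_i$.
\item Each $C_i$ is chordless.
\item Separating cycles of length at most $4$ whose interior contains no $C_j$ can be removed: colour the outside by minimality, then extend inside using the $5$-lists and the $n=1$ case.
\end{enumerate}
The core is a structural statement in the style of Postle--Thomas's \emph{hyperbolicity}. For the family of minimal "critical" graphs relative to one face, the number of vertices is linearly bounded by the length of the outer face. Equivalently, any obstruction to extending a colouring from a face with lists of sizes $1,3$ lives within bounded distance of that face.

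\textbf{Combining the faces.} Given hyperbolicity, I would argue as follows. Take a ball of radius about $D/2$ around $C_1$. By hyperbolicity, any colouring of the complement, together with a fixed colour for $v_1$, extends into this ball. So I delete the ball and replace its boundary by a new face. After this, the remaining faces are still far apart. The new face has lists of size at least $3$, or one can collapse it, and induction on $n$ applies.

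\textbf{Main obstacle.} The hard step is proving the linear bound for critical graphs. It requires a discharging / Euler-formula argument showing that chains of short separating cycles cannot persist far from $C_i$. Pinning down the exact $D$ is also delicate, since it must be large enough that the critical neighbourhoods of distinct $C_i$ do not interact.
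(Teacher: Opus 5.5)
The paper gives no proof of this statement; it imports it from Postle--Thomas. Your opening remark therefore matches the paper's treatment, and your $n=1$ base case (Thomassen's chord-splitting and peeling induction) is fine. The sketch of the general case, however, has a genuine gap at the step you present as routine.

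You claim that, for the ball $B$ of radius about $D/2$ around $C_1$, \emph{every} $L$-coloring of $G-B$, together with a color for $v_1$, extends into $B$. This is false, and no hyperbolicity statement can supply it. A vertex $w\in B$ at distance $\lfloor D/2\rfloor$ from $C_1$ may have five or more neighbors outside $B$, and a proper coloring of $G-B$ may use all five colors of $L(w)$ on them. The opposite order fails for the same reason. Coloring $B$ first can strip an unbounded number of colors from vertices along a cut of unbounded length, so the new face need not carry lists of size $3$. If you instead keep that cut as a special face in the induction, it is only about $D/2$ from the other $C_j$. The separation constant would then shrink at each induction step and could not be independent of $n$.

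The underlying problem is that the ``hyperbolicity'' you invoke is not the right object. For a single face with lists of sizes $1,3,\dots,3$ and interior lists of size $5$, Thomassen's theorem says there are no obstructions at all. So ``critical graphs relative to one face'' in your sense do not exist. The Postle--Thomas linear bound concerns canvases whose boundary cycle is precolored. Moreover, a bound $|V(G)|=O(|C|)$ does not by itself confine obstructions to bounded distance from $C$, especially since the $C_i$, and any cut around them, have unbounded length. Your ``equivalently'' is therefore not an equivalence.

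The real content of the theorem is controlling how two far-apart faces with $3$-lists interact through an annulus whose separating cycles can be arbitrarily long, and the sketch never addresses it. A smaller point: your chord reduction leaves, on the side not containing $v_i$, a face with two adjacent precolored vertices. The restricted hypothesis does not allow this when that side also contains other $C_j$, so you would need to strengthen the induction hypothesis to permit a precolored edge on each face.
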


To apply the Postle--Thomas theorem in the measurable setting, we first need a toast whose boundaries look like face boundaries.

\begin{dfn}
    For $f: \N\rightarrow \R$, an $f$-\textbf{toast} on a graph $G=(V,E)$ is a family $\s T\subseteq [V]^{<\infty}$ with the following properties:
    \begin{itemize}
        \item Complete: $\bigcup \s T=V$
        \item Laminar: For any $A, B\in \s T$, $A\cap B$ is one of $A, B,$ or $\emptyset$
        \item $n$-separated: For any $A, B$, $d(\partial A, \partial B)\geq n$
    \end{itemize}

    We say that a toast $\s T$ on a planar graph $G$ is \textbf{topological} if each $A\in \s T$ is connected and $\s T$ is
    
    \begin{itemize}
        \item $(\tau, n)$-separated: For any $A, B\in \s T$, $d(\partial_{\tau} A,\partial_{\tau} B)\geq n.$
    \end{itemize}

\end{dfn}

\begin{thm}
    For $n\in \N$ large enough, if $\s G$ is a Borel planar graph with a Borel topological $n$-toast $\s T$ so that $\partial_{\tau} A$ is $2$-regular for $A\in \s T$, then $\s G$ is Borel $5$-colorable.
\end{thm}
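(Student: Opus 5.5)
We will colour the pieces of the toast $\s T$ by induction on the laminar order, working from the innermost pieces outward. Each new region will be coloured by one application of Theorem~\ref{thm: postlethomas}, with the already-coloured inner pieces playing the role of the distinguished faces $C_i$.

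For $A\in\s T$ let $A^-=A\setminus\bigcup\{B\in\s T: B\subsetneq A\}$. By laminarity and finiteness, every vertex lies in $A^-$ for exactly one $A$: the $\subseteq$-minimal element of $\s T$ containing it. Define the rank of $A$ as the length of the longest chain of elements of $\s T$ below it. Rank is a Borel function, and at each stage we only need to handle countably many finite configurations, so everything below can be done by Borel choices. We will use a Borel linear order, or a Borel selection among finitely many options, whenever a choice among finitely many colourings is needed.

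Suppose every $B\subsetneq A$ in $\s T$ has been $5$-coloured, with the colouring properly restricted to $B\cup\partial_\tau B$ in a sense made precise next. To colour $A^-$, consider the finite plane graph $H$ induced on $A$, with each maximal $B\subsetneq A$ of $\s T$ contracted away or deleted. Its faces include the topological boundaries $\partial_\tau B$, which are cycles because they are $2$-regular. Choose $L$ as follows.
\begin{enumerate}
\item On $\partial_\tau B$, allow the colours not used by $B$'s neighbours in $B$. Each vertex of the cycle $\partial_\tau B$ may have many neighbours inside $B$, so this is the delicate point.
\item Allow all $5$ colours elsewhere.
\end{enumerate}
Degree counting alone will not give lists of size $3$ on the cycle. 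Our plan is therefore to shift the bookkeeping. When colouring $B$ itself, we colour not only $B$ but also $\partial_\tau B$, by treating the outer face $\partial_\tau B$ of $H_B$ as one of the precoloured-at-one-vertex faces. Concretely, in the inductive step for $A$ we apply Theorem~\ref{thm: postlethomas} to the planar graph induced on $A\cup\partial_\tau A$. Its distinguished faces are the cycles $\partial_\tau B$ for maximal $B\subsetneq A$, which are already fully coloured from the previous step, together with the outer cycle $\partial_\tau A$. On this graph we set the lists as follows: colours already fixed are removed; vertices adjacent to a coloured $\partial_\tau B$ receive $5$ minus the number of coloured neighbours; one vertex $v_A$ of the outer face receives a singleton; the rest of the outer face receives lists of size $3$; and all interior vertices receive $5$. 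To make this match Definition~\ref{dfn: restricted}, we re-express it on the graph where each coloured $\partial_\tau B$ is deleted. In that graph the vertices adjacent to $\partial_\tau B$ lie on a new face boundary. We then need the lists there to have size at least $3$, with one vertex per face allowed fewer colours.

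This is the main obstacle. A vertex adjacent to a precoloured cycle can see many precoloured neighbours. The fix we would try first is Thomassen's own trick, which has the same shape as the Postle--Thomas hypothesis: colour the cycle $\partial_\tau B$ during $B$'s step so that the colouring restricted to that cycle is arbitrary except at one vertex. Then, in $A$'s step, we do not fix $\partial_\tau B$ at all. Instead we keep $\partial_\tau B$ in the graph for $A$, precolour only the single vertex $v_B$, and give the remaining vertices of $\partial_\tau B$ the lists of size $\geq 3$ consisting of colours not used by their neighbours inside $B$. For this we arrange, when colouring $B$, that every vertex of $\partial_\tau B$ has at most two colours among its $B$-neighbours. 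To do so, we reserve colour classes via the size-$3$ condition on the outer face, using that the interior of $B$ next to the cycle is itself a face boundary of a smaller graph.

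The $(\tau,n)$-separation with $n\geq D$, where $D$ is from Theorem~\ref{thm: postlethomas}, guarantees that the distinguished faces $\partial_\tau B$ together with $\partial_\tau A$ are pairwise at distance at least $D$. This is the distance hypothesis of Theorem~\ref{thm: postlethomas}. Borel planarity, via Euler's formula on finite subgraphs, guarantees the finite graphs involved are planar with the stated faces. Completeness of $\s T$ ensures every vertex is eventually coloured, and the Borel choice of colourings gives a Borel $5$-colouring.
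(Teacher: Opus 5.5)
\paragraph{Verdict: genuine gap.} Your proposal has a genuine gap, and it is exactly at the point you call the main obstacle. Nothing available supplies the step ``arrange, when colouring $B$, that every vertex of $\partial_\tau B$ has at most two colours among its $B$-neighbours''. Theorem~\ref{thm: postlethomas} only produces \emph{some} $L$-colouring. It gives no control over how many colours appear around a given vertex.

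In your scheme, the distinguished face in $B$'s step is $\partial_\tau B$ itself. So the neighbours in $B$ of a vertex $z\in\partial_\tau B$ are interior vertices with $5$-element lists. There may be many of them, and they can use all five colours, leaving $z$ an empty list when you recolour $\partial_\tau B$ in $A$'s step. You hint at moving the face one layer inward. That still only restricts those neighbours to lists of size $3$, so $z$ can see at least three colours and is left with at most two, which is below what Definition~\ref{dfn: restricted} requires. So ``colour $\partial_\tau B$ provisionally, then recolour it from outside'' does not close up. The inner-to-outer induction gains nothing over the degree count you already rejected.

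\paragraph{The missing idea.} Make the boundary cycles cheap rather than flexible. Push your ``reserve colour classes'' idea to the end and give \emph{every} neighbour of a toast boundary the same list $\{2,3,4\}$. The boundary vertices are then left with only $\{0,1\}$. That is too few for Theorem~\ref{thm: postlethomas}, but exactly enough to colour them directly: the $2$-regular boundary $\partial_\tau A$ is an induced cycle, so $\partial_\tau A\setminus\{v_A\}$ is an induced path.

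This is the paper's proof:
\begin{enumerate}
\item Choose $v_A\in\partial_\tau A$ in a Borel way.
\item Properly $2$-colour $\bigcup_{A\in\s T}(\partial_\tau A\setminus\{v_A\})$ with colours $0,1$ once and for all; distinct boundaries are far apart, so this is consistent.
\item Give each $v_A$ the list $\{2\}$. Give every other vertex adjacent to a boundary the list $\{2,3,4\}$; it loses only $0$ and $1$, however many boundary neighbours it has. Give all remaining vertices all five colours.
\item On each layer $A'=A\setminus\bigcup\{B\cup(\partial_\tau B\setminus\{v_B\}):B\subsetneq A\}$, the boundary-adjacent vertices lie on face boundaries. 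These are $D$-separated once $n$ is somewhat larger than $D$ (the paper takes $n=2D+2$).
\item Theorem~\ref{thm: postlethomas} then colours each layer. The layers are coloured independently, with no induction on rank and no recolouring.
\end{enumerate}
One detail to watch: $v_A$ also has neighbours inside $A$. Keep $v_A$, with list $\{2\}$, as the distinguished vertex of the outer face in the graph for $A$ as well, so that those neighbours also avoid colour $2$.
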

\begin{proof} Set $n=2D+2$ with $D$ as in the Postle--Thomas theorem.

    For each $A\in \s T$, $\partial_{\tau} A$ is a cycle, possibly of odd length. Choose (in a Borel way) a vertex $v_a\in \partial A$ and let $f$ be a $2$-coloring of $\bigcup_{A\in \s T} \partial_\tau A\setminus v_A$ which uses colors $0$ and $1$. 

    Consider the list assignment on \[A':=A\setminus \bigcup\{ B\cup(\partial_\tau B\setminus v_B): B\subset A\}\] that says vertices adjacent to toast boundaries can't use $0$ or $1$ and special vertices on toast boundaries must use $2$. More formally:
    \[L(u)=\left\{ \begin{array} {cc}\{2\}  & (\exists B\in \s T)\;u=v_B \\ \{2,3,4\}  & u\in \bigcup_{B\in \s T}N(\partial_\tau B)\setminus v_B \\
    \{0,1,2,3,4\} & \mbox{otherwise}\end{array}  \right.\]

   Since $\s T$ is a topological toast, the vertices in $A$ adjacent to some toast boundary form a face boundary in $G\res A'$. And since $\s T$ is $(2D+2)$-separated, these facial boundaries are $D$-separated. Thus, by the Postle--Thomas theorem, $A'$ admits an $L$-coloring. Choosing an $L$-coloring on each piece of toast extends $f$ to a coloring of $\s G$.

\end{proof}

\section{Building topological toast} \label{sec: toast}

Now we'll build our toast. A key lemma is that planar pmp graphs can be extended to triangulations. 

\begin{prop}
    If $T$ is a triangulation, $\partial_\tau S=\partial S$ for all $S\subseteq V$.
\end{prop}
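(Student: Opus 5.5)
We will show the two inclusions $\partial S\subseteq\partial_\tau S$ and $\partial_\tau S\subseteq \partial S$ separately. Recall that $\partial S=N(S)\setminus S$ is the set of vertices outside $S$ adjacent to some vertex of $S$, while $\partial_\tau S$ is the set of vertices outside $S$ that lie on a common face boundary with some vertex of $S$. The first inclusion holds in any combinatorial embedding. The second uses the triangulation hypothesis: every face boundary has exactly three edges, so any two vertices on a common face are adjacent.

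For $\partial S\subseteq \partial_\tau S$, take $u\in\partial S$, so $u\notin S$ and there is $s\in S$ with $E(s,u)$. Let $e$ be the edge $\{s,u\}$. Starting at $s$ with the directed edge $e$ and following successors in the cyclic orders, we obtain a facial path containing $e$, and hence an oriented face containing both $s$ and $u$. So $u$ shares a face with $S$ and $u\in\partial_\tau S$. The only point to check is that every edge lies on some oriented face. This is immediate from the definition: the facial walk through $e$ extends maximally in both directions, and the bridge clause in the definition of oriented face handles the degenerate case.

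For $\partial_\tau S\subseteq\partial S$, take $u\in\partial_\tau S$, so $u\notin S$ and $u$ lies on a face boundary $C$ together with some $s\in S$. Since $T$ is a triangulation, $C$ has three edges. We claim $C$ is a triangle on three distinct vertices. A closed facial walk of length $3$ in a simple graph cannot repeat a vertex: a repeated vertex would force either a loop or a closed walk of length $\le 2$, which would have to traverse a single edge twice, and the bridge clause excludes this. Therefore $C$ has three pairwise adjacent vertices, and in particular $E(s,u)$. Hence $u\in N(S)\setminus S=\partial S$.

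The main obstacle is this second step: justifying that a $3$-edge face boundary really consists of three pairwise adjacent vertices. The bridge convention and the possibility of infinite or degenerate faces in the definition of oriented face make this slightly delicate. For the infinite case, we observe that an infinite oriented face has infinitely many edges, so it cannot be a face of a triangulation. For the degenerate cases, we rule out repeated vertices using simplicity of $G$ as above.
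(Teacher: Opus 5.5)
Your first inclusion is fine, and the two-inclusion structure is the natural one (the paper states this proposition without proof). The problem is in the second inclusion, at exactly the step you flag as the main obstacle. There you pass from ``the face boundary $C$ has three edges'' to ``$C$ is traced by a closed facial walk of length $3$.'' In the paper's conventions a face boundary is the \emph{set} of vertices and edges of an oriented face. The bridge clause does not exclude an edge being traversed twice; it is precisely what \emph{permits} an oriented face to traverse an edge twice, once in each direction. So a three-edge face boundary can come from a longer facial walk, and then its vertices need not be pairwise adjacent.

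Concretely, take $K_{1,3}$ with center $b$ and leaves $a,c,d$, drawn in the plane. Its unique oriented face is the closed walk $a\,b\,c\,b\,d\,b\,a$ of length $6$. Its boundary has exactly three edges, and Euler's formula holds, so it is a triangulation in the literal sense of the paper's definition. Yet for $S=\{a\}$ one has $\partial S=\{b\}$ while $\partial_\tau S=\{b,c,d\}$. Thus the statement is false as literally worded. This step cannot be closed by a better argument, only by strengthening the hypothesis.

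The repair is to read ``triangulation'' in the standard sense, which is surely what is intended: every oriented face is a closed walk of length exactly $3$. Under that hypothesis your proof is complete, and the distinctness step is simpler than you make it. In a closed walk $v_0v_1v_2v_0$ every two of the three positions are consecutive, so a repeated vertex would force a loop, which a simple graph does not have. No appeal to the bridge clause is needed. The appeal you make is also not valid: the single edge $K_2$, for instance, has the length-$2$ oriented face $a\,b\,a$. Your remark that an infinite oriented face has infinitely many edges is correct, since each directed edge occurs at most once, and it disposes of infinite faces.
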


\begin{lem}\label{lem: triangulation}
    If $\s G$ is a hyperfinite, pmp, planar triangulation, then, for any $n$, $\s G$ admits a measurable topological $n$-toast $\s T$ so that $\partial_\tau A$ is $2$-regular for $A\in \s T$. 
\end{lem}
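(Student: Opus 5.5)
We will build the toast from a standard hyperfinite toast and then ``fill in holes'' and ``smooth boundaries'' using the triangulation structure. Throughout we use the preceding proposition: in a triangulation $\partial_\tau S=\partial S$. So topological separation reduces to ordinary $n$-separation of vertex boundaries. The two remaining tasks are to make each piece connected and to make its boundary a $2$-regular (cycle) subgraph.

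\emph{Step 1: an ordinary toast.} Since $\s G$ is hyperfinite and pmp, the standard construction gives, for any $m$, a measurable $m$-toast $\s T_0$ on a conull invariant set. We write $\s G$ as an increasing union of finite Borel equivalence relations $F_k$. We then take pieces to be suitably shrunk $F_{k}$-classes, deleting vertices near boundaries of smaller pieces and discarding a small-measure leftover at each stage. By passing to a conull set we may assume completeness. We choose $m$ much larger than $n$, say $m\geq 3n+10$, to leave room for the modifications below.

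\emph{Step 2: connect and fill.} We replace each $A\in\s T_0$ by a connected component of $A$ (splitting a piece into its components preserves laminarity and separation, since components of $A$ are at distance $\geq 2$ and their boundaries lie in $\partial A$). Next we fill holes. For each connected $A$, every component of $V\setminus A$ other than the unique unbounded one is finite. This holds because $\s G$ is locally finite and the components of $\s G$ in a hyperfinite pmp graph are one- or two-ended, and finite unions of finite holes are fine in either case; if two-ended components cause trouble we handle them by a separate argument on the smooth set of ends. We let $\hat A$ be $A$ union all finite components of $V\setminus A$, or all but one in the two-ended case. Filling keeps laminarity, because a piece lying in a hole of $A$ is now contained in $\hat A$. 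It also only shrinks boundaries: $\partial\hat A\subseteq\partial A$, so separation is preserved. Now $V\setminus\hat A$ has a single component adjacent to $\hat A$, the outer one.

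\emph{Step 3: make the boundary a cycle.} This is the main obstacle. In a triangulation, if $S$ is connected and $V\setminus S$ is connected, then the link structure forces $\partial S$ to induce a subgraph containing a cycle separating $S$ from infinity. It need not be $2$-regular, however: chords and pendant ``spikes'' can occur. The fix is to enlarge $\hat A$ to $A^*=\hat A\cup\{u\in\partial\hat A: u$ has at most one neighbor outside $\hat A\cup\partial\hat A$ or lies on a chord-cut region$\}$. We iterate this finitely often, or more cleanly take $A^*$ to be the complement of the outer component of $V\setminus(\hat A\cup N(\hat A))$ and then its boundary. Standard planar-triangulation facts (the boundary of the complement of a connected, co-connected set in a triangulation is the vertex set of a face cycle of the near-triangulation) then give that $\partial A^*$ induces a cycle. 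To ensure the induced boundary has no chords, we argue that any chord would cut off a region on the outer side, and that region would belong to the outer component, contradicting connectivity of the complement through a triangle. Each enlargement moves the boundary outward by at most $2$. Since pieces were $m$-separated with $m\geq 3n+10$, the new boundaries remain $n$-separated. Laminarity is also preserved, because an enlargement of $A$ only absorbs vertices within distance $2$ of $\partial A$, and such vertices are far from any $B$ not nested with $A$, while pieces inside $A$ stay inside. Measurability holds throughout, since every operation is a local, finitary Borel operation on finite pieces.
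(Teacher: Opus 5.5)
Your overall plan matches the paper's: build a well-separated toast with connected pieces, then repair the boundaries using $\partial_\tau=\partial$ in a triangulation. However, two of your steps fail as written. The first is Step 2. Splitting each $A\in\s T_0$ into its connected components does not preserve separation. Two components $A_1,A_2$ of the same piece at distance $2$ share a boundary vertex, so $d(\partial A_1,\partial A_2)=0$. The fact that both boundaries lie in $\partial A$ is exactly why they can collide. Separation between distinct pieces is needed later, both for the $2$-coloring of the union of the boundary cycles and for the distance hypothesis in Postle--Thomas. A separated toast with connected pieces is a genuine input. Merging nearby components along short paths breaks down on chains of components spaced just above any threshold. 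The paper therefore imports it: Bowen--Kun--Sabok in the one-ended case, and a Conley--Miller toast plus a well-separated family of cuts in the two-ended case. Your two-ended treatment is also only sketched: if $A$ separates the two ends, $V\setminus\hat A$ has two components adjacent to $\hat A$, so there is no single ``outer'' one.

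The second gap is in Step 3. The ``standard fact'' you invoke is false, and $A^*$ need not have $2$-regular boundary. Take the triangular lattice with $\hat A=\{v\}$, so $A^*$ is the radius-$1$ ball and $\partial A^*$ is the second ring. Pick a face $sxz$ with $s$ in the first ring and $x,z$ in the second, and subdivide it by a degree-$3$ vertex $y$. Then $y\notin\hat A\cup N(\hat A)$ and $y$ is adjacent to $x$, so $y\in O$. Since $y$ is also adjacent to $s\in A^*$, we get $y\in\partial A^*$. Now $x$ has three neighbors in $\partial A^*$: $y$, $z$, and its other ring-neighbor. The chord $xz$ does cut off a region, but that region lies in $O$, and $O$ stays connected through $x$, so there is no contradiction. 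Pushing outward again simply reproduces the configuration at the next radius. Your informal first version (absorbing boundary vertices ``on a chord-cut region'') gestures at the right fix but is never made precise or checked.

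The missing idea is to close inward rather than push outward. Add to $A$ every vertex $v$ that is \emph{penned in}, i.e.\ every ray from $v$ meets $(A\cup\partial A)\setminus\{v\}$. Call the result $P(A)$.
\begin{itemize}
\item $\partial P(A)\subseteq\partial A$, so separation is automatic.
\item Laminarity follows from connectedness of the pieces.
\item $2$-regularity follows from four facts. Each $v\in\partial P(A)$ has a neighbor outside $P(A)\cup\partial P(A)$. Its $P(A)$-neighbors form one consecutive block in the rotation at $v$. The two vertices flanking that block lie in the boundary because faces are triangles. Any further boundary neighbor would pen one of those two in.
\end{itemize}
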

\begin{proof}We may assume that $\s G$ has no finite components.

    By Bowen--Kun--Sabok \cite{BowenKunSabok}, $\s G$ admits a measurable $2n+5$-toast where each piece is connected if every component of $\s G$ is one-ended (note that this is weaker than their notion of connected toast). If $\s G$ is 2-ended, work of Conley and Miller \cite{ConleyMiller} shows that $\s G$ has a measurable toast $\s R_0$, and we can find a $2n+5$-separated family of cuts of the graph. We can then get a toast with connected pieces by considering the maximal regions between cuts contained in pieces from $\s R_0$. So, let $\s R$ be a toast on $\s G$ with connected pieces. Since $\s G$ is a triangulation, $\partial_\tau(A)= \partial A$ for $A\subseteq \s G$. We just need to arrange for boundaries to be $2$-regular.

    Say that a vertex $v$ is penned in by $A\in \s R$ if $\{v\}\sqsubseteq \partial A\cup A\setminus\{v\}$, i.e.~any ray starting from $v$ passes through $\partial A\cup A\setminus \{v\}$. In particular, if $v\in \partial A$ and so are all of the neighbors of $v$, then $v$ is penned in by $A$.
        
    Set $P(A)=A\cup\{v: v\mbox{ is penned in by }A\},$ and consider $\s R'=\{P(A): A\in \s R'\}.$ Notice that $P(A)$ is finite and connected for any $A\in \s R$. Suppose towards contradiction that $v\in \partial P(A)\setminus \partial A$, then there is some neighbor $u$ of $v$ that is penned in by $A$, but $v$ is not in $A$ and $v$ is not penned in by $A$. But this means a ray from $u$ can escape via $v$, which is a contradiction. So, $\partial P(A)\subseteq \partial A$. It follows that no vertex outside $P(A)$ is penned in by $P(A)$ and that $\s R'$ is also $(\tau, 2n+3)$-separated. Let us check that $\s R'$ is laminar. If $A\subseteq B$ then $P(A)\subseteq P(B)$. So, suppose that $A\cap B=\emptyset$ and $A,B\in \s R$. Since $A$ is connected, if any vertex of $A$ is not penned in by $B$, then no vertex of is penned in by $B$. In this case $P(A)\cap P(B)=\emptyset$. And, if every vertex of $A$ is penned in by $B$, then $P(A)\subseteq P(B)$. So, $\s R'$ is a topological toast.
    
    If $v\in \partial A\in \s R'$, then $v$ is not penned in by $A$, so $v$ is adjacent to some vertex not in $\partial A$. In particular, the induced graph on $\partial A$ has all degrees at least $2$ (if $e$ is adjacent to $v\in \partial A$ and $e\not\in \partial A$, there are vertices in $\partial A$ clockwise and counterclockwise from $e$, which must be different since $\s G$ is a triangulation). I claim that every vertex in $\partial A$ has degree exactly $2$ in $\s G\res \partial A$.

    Fix any $v\in \partial A$. The neighbors of $v$ in $A$ (or rather the edges connected to them) must be consecutive in the cyclic order around $v$. Otherwise, $A$ would have to pen in a vertex. And, since $\s G$ is a triangulation, $v$ has neighbors $x,y\in \partial A$ which come just before and after the block of $A$ neighbors. If $v$ had any other neighbors in $\partial A$, then $A$ would have to pen in one of $x$ or $y$, so $v$ has exactly two neighbors in $\partial A$, as desired, and $\s R'$ is our required toast.
\end{proof}

\begin{lem}
    If $\s G$ is a locally finite Borel planar graph, there is a locally finite planar triangulation $\s H$ extending $\s G$. Further if $\s G$ is hyperfinite or pmp, so is $\s H$. 
\end{lem}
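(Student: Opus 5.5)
The plan is to triangulate $\s G$ face by face, working from its Borel combinatorial embedding, and to do it in a way that is local and Borel. I would first make the graph $2$-connected, so that every face boundary is a genuine cycle (or a bi-infinite path), and only then triangulate each face.

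The first step is to list the oriented faces. The map sending a directed edge to its successor along a facial path is Borel, because it only uses the cyclic order at one vertex. So the set of oriented faces, or equivalently the equivalence classes of directed edges lying on a common facial walk, is a Borel equivalence relation. Since $\s G$ is locally finite, each vertex lies on only finitely many faces. If $\s G$ is not locally finite in the facial sense, that is, if some finite face has large length, this does no harm, because we only need local finiteness of $\s H$.

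The main obstacle is infinite faces, and the problem of keeping degrees finite when a face boundary is long or infinite. The naive fix is to add a new vertex inside every face and join it to the whole boundary. That gives a triangulation, but the new vertex has infinite degree for infinite faces, and unbounded degree for large faces. Instead I would triangulate each face by adding \emph{nested concentric layers}.
\begin{itemize}
\item For a face bounded by a walk $\dots,v_i,v_{i+1},\dots$, place a new vertex $w_i$ for each boundary occurrence. Join $w_i$ to $v_i$, to $v_{i+1}$, and to $w_{i+1}$. This creates a new face bounded by the $w$'s, together with triangles between the two layers.
\item To make the process terminate with bounded degree, each new layer should halve the length. Use a Borel maximal independent set, or equivalently a Borel proper coloring of the facial walk, which exists with $3$ colors on paths and cycles by Kechris--Solecki--Todorcevic, to pick roughly every second or third boundary occurrence. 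Connect the chosen vertices by chords, which cuts off triangles and short faces of length at most $4$; triangulate those with a single diagonal.
\item A finite face of length $\ell$ then takes about $\log \ell$ rounds. An infinite face gets infinitely many rounds, which produces an infinite ``hyperbolic'' triangulated half-plane in which every vertex has degree at most about $6$.
\item Bridges and cut vertices, where a boundary walk repeats vertices, are handled the same way. Working with boundary \emph{occurrences} rather than vertices, and joining each $w_i$ to the occurrence, keeps the graph simple. Once one layer has been added the boundary is a cycle, so no multi-edges arise afterwards.
\end{itemize}
The resulting $\s H$ is Borel: its vertex set is a Borel subset of $V$ together with countably many disjoint Borel copies of the set of facial edge-occurrences, indexed by depth. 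Its embedding is Borel, and it still satisfies Euler's formula, since each step adds edges inside a face, and adding a vertex with its edges inside a face preserves planarity of the drawing.

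The hardest points are preserving pmp and hyperfiniteness. Here I would use the standard facts that a countable-to-one Borel extension whose new points are attached to old ones by a Borel finite-to-one map inherits both properties. Every new vertex is assigned Borel-ly to a unique old vertex by following the chain $w\mapsto$ the occurrence it came from and then to that edge's tail. This map is finite-to-one on finite faces, but \emph{not} on infinite faces, because the depths are unbounded. For infinite faces, the fibre over an old vertex $v$ contains one vertex at each depth, so the fibre is countable. The pmp extension should therefore be measured on $V\sqcup$ (new vertices) as $\mu$ times counting measure on depth with geometric weights, and then normalized; weights $2^{-k}$ at depth $k$ work after checking that adjacent new vertices differ in depth by at most $1$. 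This is only quasi-invariant, though.

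To get a genuinely invariant measure, the better choice is to avoid unbounded depth for infinite faces altogether. For an infinite face, add just \emph{one} parallel layer $w_i$ along the bi-infinite boundary, and then join consecutive $w$'s to a further parallel layer, and so on. Alternatively, and more cleanly, triangulate the infinite face by a ``ladder'' fan in which each new vertex attaches only finitely many old vertices, using a Borel $\Z$-like structure on the face. The point I expect to have to settle is exactly this: a measure-preserving, finite-to-one triangulation of infinite faces, presumably by an Euler-formula argument showing that a.e.\ component of a pmp hyperfinite planar graph has only finitely many ends per face, so a single layered fan with finite fibres suffices. Hyperfiniteness then follows from Lemma~\ref{lem: hyperfinite}-style quotient reasoning, since $E_{\s H}$ restricted to $V$ equals $E_{\s G}$ and every new point is finitely attached.
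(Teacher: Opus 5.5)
Your proposal leaves the essential case open, and the route you chose makes that case harder than it needs to be.

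\textbf{The gap.} Closing off an infinite face with parallel layers of new vertices takes infinitely many layers. So whenever the vertices on infinite faces have positive measure, the new vertex set $W$ gets infinite measure. Indeed, for any $\s H$-invariant $\nu$ extending $\mu$ and any Borel $\pi\colon W\to V$ sending each new vertex to an old vertex in its component, invariance forces
\[\nu(W)=\int_V|\pi^{-1}(v)|\,d\nu(v).\]
No choice of weights can fix this; your geometric weights only give quasi-invariance, as you note. The fallback via ``finitely many ends per face'' does not help either. Even one face with a single bi-infinite boundary is never closed off by finitely many layers, whatever the number of ends. What is needed is a nested family of finite pieces exhausting each infinite face, and your proposal never constructs one.

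There is also a smaller problem, already for finite faces. One new vertex per boundary occurrence puts $\deg(v)$ new vertices over $v$, so you need $\int\deg\,d\mu<\infty$. That holds in the hyperfinite planar case, where it is at most $6$ by finite approximation. It fails for pmp planar graphs in general, e.g.\ a weighted union of Bernoulli graphings of $d$-regular trees with weights $w_d$ summing to $1$ and $\sum_d d\,w_d=\infty$. So the ``pmp'' clause fails for your construction.

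\textbf{The missing idea: add no vertices at all.} If $\s H$ has the same vertex set as $\s G$ and an edge set $T$ with $E\subseteq T\subseteq E_{\s G}$, then $E_{\s H}=E_{\s G}$. Being pmp or hyperfinite depends only on this relation, so both transfer for free. This is how the paper argues.
\begin{itemize}
\item Finite faces get a fan of chords from one boundary vertex, chosen via Lusin--Novikov.
\item For infinite faces, take the space $\s X$ of pairs $(v,F)$ with $F$ an infinite face through $v$, joined along faces. This graph is hyperfinite, being a finite-to-one quotient of the shift on oriented faces, which is a $\Z$-action. So it admits a toast with connected pieces, i.e.\ finite intervals of a face.
\item Triangulate each infinite face by induction on the toast. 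Once the subpieces of a piece $A=v_0,\dots,v_n$ are done and only their endpoints remain exposed, add chords from $v_0$ to the other exposed vertices of $A$. Then only $v_0$ and $v_n$ stay on the infinite face.
\item Since the toast covers everything, after $\omega$ stages no vertex is left exposed.
\end{itemize}
Your remark about a ``Borel $\Z$-like structure on the face'' points in this direction. But the toast-driven nested chords are the real content of the lemma, and once you have them the layers of new vertices are unnecessary.
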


\begin{proof}
    Say $\s G$ has edge-set $E$. Since being hyperfinite or pmp only depend on $E_G$, it suffices to find a locally finite triangulation with edge set $T$ so that $E\subseteq T\subseteq E_G$.

    Any finite face with vertices $v_1,..., v_n$, we can triangulate by adding edges $\{v_1,v_i\}$ for $i=2,...,n$ (after picking an order using the Lusin--Novikov theorem). To handle the infinite faces, we want to pick a toast on each face. Here's how we can do that formally.
    
    Let $\s X$ be the space of pairs $(v, F)$ where $F$ is an infinite face and $v$ is a vertex on $F$. Recall that, for us, a face is an equivalence class of oriented faces, so we can set
    \[\s X=\{(v,[x]): x(0)=v\}.\] Put a graph on $\s X$ by declaring $(v, F)$ and $(u, C)$ adjacent when $u,v$ are adjacent in $\s G$ and $F=C$. The space of oriented faces is a Borel subset of $\s G^\Z$, so it carries a standard Borel structure. Each oriented face only visits a vertex finitely many times, so $\s X$ is a quotient of the space of oriented faces along a finite-class equivalence relation. And, since $\equiv_F$ is hyperfinite (as it is induced by a $\Z$-action), so is the graph on $\s X$. Therefore, we can find a toast structure $\s F$ on $\s X$ where each piece has is connected.

    We will attach edges to pieces of toast $A\in \s F$ by induction on the number of pieces of toast contained in $A$. Since each piece of toast is connected, all the vertices $(v, F)\in A$ share a common face $F$. Let $F_i$ be the infinite face after we have added edges at stage $i$ with $F_i\subseteq F$. Say that a vertex is exposed at stage $i$ if it is in $F_i$. Note that the orientation of $F$ gives us an order on the vertices. We will ensure that after adding edges to $A$, $A$ is triangulated and only the first and last vertex in $A$ are exposed. Suppose we have triangulated every piece of toast contained in $A$, and the vertices are $v_0,v_1,..., v_n$ in order. Add edges in clockwise order from $v_0$ to $v_j=1,...,n$ across $F_i$. Since $A$ is connected, the result is still planar an only $v_0$ and $v_n$ are left exposed since the edge between disconnects every other vertex from the infinite face. After $\omega$-many steps, no vertices are left exposed. Carrying out this process on every face simultaneously triangulates $\s G$. 
\end{proof}

Combining our lemmas, we get the main theorem:

\begin{thm}
    Any hyperfinite, locally finite, pmp, planar graph $\s G$ is measurably $5$-colorable.
\end{thm}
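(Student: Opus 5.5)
The plan is to chain together the three results of this and the previous section. We pass to a co-null invariant Borel set on which $\s G$ is Borel planar, locally finite, hyperfinite and preserves $\mu$. Write $E$ for the edge set of $\s G$.

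First, apply the triangulation lemma to $\s G$. This gives a locally finite Borel planar triangulation $\s H$ whose edge set $T$ satisfies $E\subseteq T\subseteq E_{\s G}$. Since $\s H$ generates the same connectedness relation as $\s G$, it is still hyperfinite and still preserves $\mu$. One point needs checking here: the triangulation lemma is only asserted on a co-null set, and the toast on the face space $\s X$ was measurable. So I would restrict once more to an invariant co-null Borel set on which $\s H$ is an honest Borel triangulation. This is harmless because measurable colorings only need to be proper almost everywhere.

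Next, fix $n$ large enough for the coloring theorem of the previous section, namely $n=2D+2$ with $D$ from the Postle--Thomas theorem. Apply Lemma \ref{lem: triangulation} to $\s H$. It gives a measurable topological $n$-toast $\s T$ on $\s H$ such that $\partial_\tau A$ is $2$-regular, i.e.\ a cycle, for every $A\in\s T$. After discarding a null invariant set, we may take $\s T$ to be Borel. Since $\s H$ is a Borel planar graph carrying a Borel topological $n$-toast with $2$-regular topological boundaries, the coloring theorem of Section \ref{sec: amenable} yields a Borel $5$-coloring $c$ of $\s H$ on this co-null set. Because $E\subseteq T$, every $\s G$-edge is an $\s H$-edge, so $c$ is also a proper coloring of $\s G$ almost everywhere. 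Hence $\chi_\mu(\s G)\leq 5$.

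The main obstacle I expect is compatibility between the objects each step hands to the next. Lemma \ref{lem: triangulation} produces a toast whose pieces are connected and separated in $\s H$, but the coloring theorem needs $\s H$ itself to be Borel planar with an Euler-formula-satisfying combinatorial embedding. So I would verify that the added diagonals respect the embedding: they are inserted into faces in the cyclic order, so they do. I would also verify that the finite faces of $\s H$ are genuinely triangles, so that $\partial_\tau=\partial$ holds and the boundaries of the toast pieces are the face boundaries that the Postle--Thomas argument uses.

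A second point is the case where some components of $\s G$ are finite. There I would simply $5$-color those components in a Borel way, using the four color theorem together with a Borel choice of a coloring on each finite component. The rest of the argument then runs on the infinite components, as in the proof of Lemma \ref{lem: triangulation}.
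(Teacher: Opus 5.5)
Your proposal is correct and follows the paper's proof. You extend $\s G$ to a triangulation with the same connectedness relation, apply Lemma \ref{lem: triangulation} with $n=2D+2$ to get topological toast with $2$-regular boundaries, invoke the toast-coloring theorem of Section \ref{sec: amenable}, and restrict the $5$-coloring back to $\s G$; your extra bookkeeping (passing to invariant co-null Borel sets, choosing $n$ explicitly, and treating finite components separately) only makes explicit what the paper's three-line proof leaves implicit.
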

\begin{proof}
    We can extend $\s G$ to a triangulation. The triangulation has arbitrarily well separated topological toast with 2-regular boundaries, so it has a measurable $5$-coloring. This restricts to a $5$-coloring of $\s G$.
\end{proof}
\begin{cor}
    For all $p$ $\chi_{fiid}((\widetilde\Z^2)_p\leq 5$.
\end{cor}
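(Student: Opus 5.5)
The plan is to split by regime and, in each case, reduce to a result already established, using the Corollary $\chi_{\mu}(\s B(\widetilde G_p))=\chi_{fiid}(\widetilde G_p)$ to pass between fiid colorings and measurable colorings of the Bernoulli graphing. For $p>\tfrac12$ there is nothing new to do: the corollary of Section \ref{sec: supercritical} already gives an fiid $4$-coloring, hence a $5$-coloring. So the substance lies in $p\le \tfrac12$, where $(\Z^2)_p$ almost surely has only finite components. This holds for $p<\tfrac12$ by $p_c(\Z^2)=\tfrac12$, and at $p=\tfrac12$ by the absence of an infinite cluster at criticality, which the Russo--Seymour--Welsh argument of Section \ref{sec: supercritical} also yields, since every vertex is surrounded by infinitely many closed circuits.

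In this regime I will verify that $\s B(\widetilde{(\Z^2)}_p)$ satisfies the hypotheses of the main theorem of Section \ref{sec: toast}.

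\emph{Locally finite.} Each vertex is a finite $\omega_p$-class, and its neighbors in the quotient are classes meeting the finite set $\partial[v]$. Hence degrees are finite almost surely, and we restrict to a co-null invariant Borel set.

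\emph{pmp.} This is item (1) of the proposition on percolation quotients, with respect to $\widetilde\mu$.

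\emph{Hyperfinite.} $\Z^2$ is amenable, so $\s B(\Z^2)$ is hyperfinite; this is the content of Lemma \ref{lem: hyperfinite} together with the standard fact for amenable groups. Since $E_p$ has finite classes on a co-null set, it is smooth, and the argument in the proof of Lemma \ref{lem: hyperfinite} shows that the quotient $\s B(\Z^2)/E_p$ is hyperfinite.

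\emph{Borel planar.} $\s B(\Z^2)$ carries a Borel combinatorial embedding inherited from $\Z^2$, since $\aut$ consists of orientation preserving maps. Contracting finite connected classes preserves this, as remarked in the preliminaries. Euler's formula on finite subgraphs holds because each component of the quotient is a contraction of $\Z^2$ along finite connected sets, and such a contraction is planar.

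Applying the main theorem of Section \ref{sec: toast} then gives $\chi_{\widetilde\mu}\bigl(\s B(\widetilde{(\Z^2)}_p)\bigr)\le 5$, and the corollary converts this into $\chi_{fiid}\bigl((\widetilde{\Z^2})_p\bigr)\le 5$.

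The main obstacle I expect is the Borel planarity claim: specifically, checking that the contracted combinatorial embedding, with merged parallel edges removed so that the quotient is simple, is Borel and still satisfies Euler's formula. My first approach would be to argue componentwise. A finite connected set in a plane drawing can be shrunk to a point, and parallel edges can be deleted, without destroying planarity. The induced rotation system at $[v]$ is read off Borel-ly by walking around the outer face boundary of the finite cluster $[v]$. A secondary point is the critical case $p=\tfrac12$, where the normalization $\bb E(|[x]|\inv)$ must be positive; it is, since $|[x]|\inv>0$ almost surely.
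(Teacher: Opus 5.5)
Your proposal is correct and follows the paper's route. The paper gets this corollary directly from its $5$-coloring theorem for hyperfinite, locally finite, pmp, Borel planar graphs, applied to $\s B(\widetilde{(\Z^2)}_p)$ and transferred back via $\chi_{\mu}(\s B(\widetilde G_p))=\chi_{fiid}(\widetilde G_p)$, with $p>\tfrac12$ already covered by the supercritical $4$-coloring.

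One small correction to your planarity sketch: walking only the outer face boundary of $[v]$ misses the edges into clusters that $[v]$ encloses. The rotation at $[v]$ should instead come from contracting a Borel-chosen spanning tree of the cluster and then deleting loops and surplus parallel edges. This is what fills in the paper's bare assertion that the quotient inherits a combinatorial embedding.
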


\begin{qst}
    Is there a hyperfinite planar pmp graph with no measurable $4$-coloring? Is $\s B((\widetilde \Z^2)_p)$ measurably 4-colorable for $p\leq 1/2$?
\end{qst}

\bibliographystyle{plain}
\bibliography{refs}

\appendix

\section{Borel planarity}

In the body of this article, we used combinatorial embeddings to encode planarity. The main utility of this approach was that let us identify faces in a Borel way and orient each face. The other main approach to planarity in the descriptive setting is via 2-bases. This is the approach taken by Conley--Gaboriau--Marks--Tucker-Drob, for instance \cite{ConleyMarksGaboriauTuckerDrob}. 

\begin{dfn}
    A \textbf{2-basis} for a graph is a basis $\s B$ for the cycle space consisting of simple cycles so that each edge is contained in at most 2 elements of $\s B$.
\end{dfn}

If $G$ is 2-connected and finite, a 2-basis is equivalent to the set of faces of a planar drawing of $G$.

\begin{dfn}
    A \textbf{b-planar} Borel graph is a Borel graph with a Borel 2-basis.
\end{dfn}

Being b-planar is a weaker condition than being planar. Given a Borel planar graph, we can identify the faces and select a 2-basis (with perhaps some difficulty about bridges). But, if we only have a 2-basis, we can't identify  consistent orientation to cyclically order the edges around each vertex. 

For example, let's consider $\Z^2$ as a pure graph (no combinatorial embedding). Then, $\aut(\Z^2)$ contains both orientation preserving and orientation reversing automorphisms and preserves the 2-basis of commuting squares. The quotient of the Bernoulli shift
\[[0,1]^{\s Z^2}/\stab((0,0))\] is b-planar but not planar.

However, all of our arguments still go through for b-planar graphs at the expense of some essentially notational headaches. After we pass to triangulations, we only use the fact that our graphs are locally planar (and that $\partial_\tau$ and $\partial$ agree for triangulations). In the proof that Borel planar graphs extend to triangulations, we use our combinatorial embeddings first to identity the space of oriented faces (in such a way that $\equiv_F$ is a hyperfinite relation), and then to order the vertices on each face. This can be avoided as follows. 

\begin{prop}
    If $\s G$ is a locally finite b-planar Borel graph, then $\s G$ extends to a locally finite b-planar triangulation.
\end{prop}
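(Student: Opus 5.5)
We mimic the proof that Borel planar graphs extend to triangulations, replacing the two uses of the combinatorial embedding. The first use is to identify oriented faces Borel-ly so that $\equiv_F$ is hyperfinite. The second is to order the vertices along each face. With a Borel 2-basis $\s B$ in hand, we treat the elements of $\s B$ as the finite faces. For the infinite faces we need a substitute, and I would recover it locally.

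First, the finite faces. Each $C\in\s B$ is a finite simple cycle. Using Lusin--Novikov, choose Borel-ly a base vertex $v_1\in C$ and one of the two directions around $C$, giving $v_1,\dots,v_n$. Then add the fan $\{v_1,v_i\}$. No orientation is needed because a cycle has only two directions and we simply pick one. Each new edge splits $C$ into triangles; we replace $C$ in the basis by these triangles, so every edge still lies in at most two basis elements. Local finiteness is preserved since each vertex lies on finitely many basis cycles (it has finite degree and each edge is in at most two cycles).

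Second, the infinite faces, which are not seen by $\s B$. Say an edge is \emph{deficient} if it lies in fewer than two elements of $\s B$. Around a vertex $v$, the basis cycles through $v$ link its incident edges into a path or cycle structure: two edges are consecutive if some $C\in\s B$ contains both. This is the local rotation, defined only up to reversal, which is exactly the ambiguity we can tolerate. The maximal chains end in deficient edges. Pairing up these endpoints at each vertex gives ``face-walks'': bi-infinite (or finite, if the walk closes up) sequences of edges in which each consecutive pair shares a vertex and is consecutive at that vertex in the complement of the basis cycles. The space of such walks, modulo shift and reversal, is a Borel quotient of a Borel subset of $\s G^\Z$. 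The relation ``same unoriented infinite face'' is induced by a $\Z$-action up to a finite (two-to-one) quotient, hence is hyperfinite.

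Finite closed walks are handled by the fan construction above. For the infinite ones, we form the graph $\s X$ on pairs (vertex, infinite face) exactly as before and take a Borel toast with connected pieces. A connected piece is an interval of the walk, and an interval of a path carries a linear order up to reversal; we choose one direction Borel-ly per piece. From there the inductive fan construction runs verbatim, adding edges $v_0v_j$ and adding the resulting triangles to the basis. After $\omega$ steps every edge lies in exactly two basis triangles, so we have a b-planar triangulation. Hyperfiniteness and the pmp property hold because all new edges lie inside $E_G$.

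The main obstacle is the second step: showing that the deficient edges genuinely assemble into well-defined face-walks without an embedding, especially at cut vertices or bridges where local rotations are ambiguous. I would first reduce to the 2-connected-like local situation by noting that in a triangulation-in-progress, ambiguity only arises at finitely many choices per vertex, and then resolve those choices Borel-ly with Lusin--Novikov so the walks are well defined. Failing that, one could first add edges to make $\s G$ locally 2-connected.
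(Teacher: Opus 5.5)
\textbf{There is a genuine gap at cut vertices.} Your overall architecture matches the paper's: fan the basis cycles, recover the infinite faces as a hyperfinite structure, put a toast with connected pieces on $\s X$, and let each piece choose its own direction instead of using a global orientation. The gap is where you flag it, and it is not just a matter of making finitely many choices Borel. The rule ``pair endpoints of path-chains of deficient edges'' does not in general give a rotation at a cut vertex.

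Concretely, take $\Z^2$ with the unit-square $2$-basis, attach a pendant triangle $vab$ at a vertex $v$, and add $vab$ to the basis.
At $v$ the four grid edges form a cycle-chain containing no deficient edge. So the only possible pairing is $va\leftrightarrow vb$, and the face-walk is the closed walk $vabv$. Fanning it adds no edge. Then either $va,ab,bv$ stay in a single basis element, or you insert $vab$ a second time and lose independence. (In general, the fan triangles of the outer walk of a pendant finite block sum to the sum of its interior basis cycles.) Either way $v$ remains a cut vertex. A triangulation has no cut vertices, since the link of every vertex is a cycle, so the output is not a triangulation. That link-is-a-cycle property is exactly what the later $2$-regular boundary argument uses.

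The same defect appears whenever a block meets $v$ in a cycle-chain. Even when only path-chains meet at $v$, an arbitrary pairing can close each chain on itself and produce a pinched vertex. You need two things:
\begin{enumerate}
\item the chains at $v$ assembled into a single cyclic order;
\item the freedom to open a basis face of a block in which $v$ is interior.
\end{enumerate}
Neither is available from deficient edges alone. Your fallback of first making $\s G$ locally $2$-connected by adding edges is precisely this missing step, not a reduction of it.

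The paper deals with this before reading off faces:
\begin{enumerate}
\item It decomposes $\s G$ into $2$-connected blocks and cut vertices.
\item It contracts each finite $2$-connected block to a single vertex, so pendant configurations like the one above are absorbed. It asserts, without further argument, that a triangulation of the contracted graph yields one of $\s G$.
\item It detects infinite-face edges inside the remaining $2$-connected blocks as those lying in exactly one basis element.
\item It fixes an arbitrary cyclic order of the edges at each cut vertex.
\end{enumerate}
Oriented infinite faces are then defined from an actual local rotation, not from a pairing of deficient edges. Only after this does the paper run the $\s X$/toast/fan argument with each toast piece choosing its own order, which is the part of your proposal that does match.
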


\begin{proof}
    We can decompose $\s G$ into $2$-connected blocks and cut vertices. Contract each finite 2-connected block into a single vertex. If we can triangulate this contracted graph in a Borel way, then we can triangulate the original graph.
    
    If an edge $e$ is in a 2-connected block, it lies on an infinite face if and only if it is in only one element of the 2-basis for $\s G$. If $e$ is not in a 2-connected block, either one of its endpoints is a cut vertex or it connects two 2-connected blocks along infinite faces (since we have contracted the finite blocks). Any choice of cyclic order of edges around the cut vertices extends to a planar embedding. So, we can fix an cyclic order arbitrarily at each cut vertex, then say that an oriented infinite face is a bi-infinite path of edges that either lie on infinite faces of 2-connected blocks, cross between infinite faces of 2-connected blocks, or that follow the chosen order at each cut vertex.

    Now define a face as an equivalence class of oriented faces by the relation
    \[x\equiv_F y\;:\lra \;(\exists \rho\in \aut(\Z))\;x\circ \rho=y. \] This is still a hyperfinite equivalence relation, and for any vertex $v$ there are still only finitely many orientations of a fixed face with the origin at $v$. So, as in the proof of \ref{lem: triangulation}, we can form a hyperfinite graph $\s X$ of pairs of faces and vertices.

     There is no longer a natural global order on the space $\s X$, rather, the vertices on a face can come in two possible orders. This is still sufficient because each piece of toast can choose an order independently. The rest of the proof follows exactly as in Lemma \ref{lem: triangulation}.

\end{proof}

\end{document}